\newtheorem{theorem}{\bf Theorem}[section]
\newtheorem{lemma}[theorem]{\bf Lemma}
\newtheorem{proposition}[theorem]{\bf Proposition}
\newtheorem{remark}[theorem]{\sc Remark}
\begin{document}

\title[The commuting probability]{On the commuting probability for subgroups of a finite group}

\author{Eloisa Detomi}
\address{Dipartimento di Ingegneria dell'Informazione - DEI, Universit\`a 
di Padova, Via G. Gradenigo 6/B, 35121 Padova, Italy} 
\email{eloisa.detomi@unipd.it}

\author{Pavel Shumyatsky}
\address{Department of Mathematics, University of Brasilia\\
Brasilia-DF \\ 70910-900 Brazil}
\email{pavel@unb.br}

\thanks{
The first author is a member of GNSAGA (Indam). 
The second author was supported by FAPDF and CNPq.}
\keywords{Commutativity  degree, Conjugacy classes, Nilpotent subgroups} 
\subjclass[2020]{20E45, 20D60, 20P05} 

\begin{abstract} Let $K$ be a subgroup of a finite group $G$. The probability that an element of $G$ commutes with an element of $K$ is denoted by 
$Pr(K,G)$. Assume that $Pr(K,G)\geq\epsilon$ for some fixed $\epsilon>0$. 
We show that there is a normal subgroup $T\leq G$ and a subgroup $B\leq K$ such that the indexes $[G:T]$ and $[K:B]$ and the order of the commutator subgroup $[T,B]$ are $\epsilon$-bounded. This extends the well known theorem, due to P. M. Neumann, that covers the case where $K=G$.

\noindent We deduce a number of corollaries of this result. A typical application is that if $K$ is the generalized Fitting subgroup $F^*(G)$ then 
$G$ has a class-2-nilpotent normal subgroup $R$ such that both the index $[G:R]$ and the order of the commutator subgroup $[R,R]$ are $\epsilon$-bounded. In the same spirit we consider the cases where $K$ is a term of the lower central series of $G$, or a Sylow subgroup, etc.
\end{abstract}

\maketitle

\section{Introduction} 
The probability that two randomly chosen elements of a finite group $G$ commute is given by $$Pr(G)=\frac{|\{(x,y)\in G\times G\ :\ xy=yx \}|}{|G|^2}.$$ The above number is called the {\it commuting probability} (or 
the {\it commutativity degree}) of $G$. This is a well studied concept. In the literature one can find publications dealing with problems on the set of possible values of $Pr(G)$ and the influence of $Pr(G)$ over the structure of $G$ (see \cite{eberhard,guraro,gustaf,lescot1,lescot2} and references therein). The reader can consult \cite{mann,shalev} and references therein for related developments concerning probabilistic identities in 
groups.

P. M. Neumann \cite{neumann} proved the following theorem (see also \cite{eberhard}).
\begin{theorem}\label{neumann} Let $\epsilon>0$, and let $G$ be a finite group such that $Pr(G)\geq\epsilon$. Then $G$ has a nilpotent normal subgroup $R$ of nilpotency class at most $2$ such that both the index $[G:R]$ 
and the order of the commutator subgroup $[R,R]$ are $\epsilon$-bounded.
\end{theorem}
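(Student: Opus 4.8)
The plan is to exploit the identity $Pr(G)=k(G)/|G|$, where $k(G)$ is the number of conjugacy classes, so that the hypothesis becomes $k(G)\ge\epsilon|G|$, and then to produce a normal subgroup of $\epsilon$-bounded index whose derived subgroup is $\epsilon$-bounded; the class-$2$ refinement will follow by a routine centralizer argument. First I would rewrite $\sum_{x\in G}|C_G(x)|=|G|^2\,Pr(G)\ge\epsilon|G|^2$ and run a Markov-type estimate: putting $S=\{x\in G:\ |C_G(x)|\ge(\epsilon/2)|G|\}$, the inequality $\epsilon|G|^2\le |S|\cdot|G|+|G|\cdot(\epsilon/2)|G|$ gives $|S|\ge(\epsilon/2)|G|$. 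Thus an $\epsilon$-bounded-from-below proportion of elements have conjugacy class of size at most $m:=2/\epsilon$; in other words, $G$ has ``many'' small classes.

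The core of the argument is to convert this abundance of small classes into a single normal subgroup $T$ of $\epsilon$-bounded index with $|[T,T]|$ $\epsilon$-bounded. The essential external input is B.\,M.\ Neumann's theorem: a finite group in which every conjugacy class has size at most $n$ has $|[G,G]|$ bounded in terms of $n$. Hence it suffices to find a normal subgroup $T$ of $\epsilon$-bounded index which is $m'$-BFC (all of its classes of size at most $m'$) for some $\epsilon$-bounded $m'$. The naive candidate $T=\langle S\rangle$ is normal of index at most $2/\epsilon$, but it need not be BFC: a product of small-class elements $y=s_1\cdots s_\ell$ only satisfies $|y^G|\le m^{\ell}$, so classes in $\langle S\rangle$ can grow with the word length. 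This is where genuine work is needed, and I expect it to be the main obstacle.

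To obtain $T$ I would argue by induction on $|G|$, using Gallagher's inequality $Pr(G)\le Pr(N)\,Pr(G/N)$ for $N\trianglelefteq G$, which transfers the hypothesis $Pr\ge\epsilon$ to every normal subgroup and every quotient. Taking $N$ minimal normal, the nonabelian case is benign: since $Pr\to 0$ along semisimple groups of increasing order, $Pr(N)\ge\epsilon$ forces $|N|$ to be $\epsilon$-bounded, and the induction hypothesis applied to $G/N$ together with the bounded extension yields the structure for $G$. The hard case is when $N$ is a large elementary abelian minimal normal subgroup: then one must show that, under $Pr(G)\ge\epsilon$, the part of $[G,G]$ lying inside $N$ stays $\epsilon$-bounded, equivalently that a positive proportion of elements centralize almost all of $N$. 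Controlling this interaction with large abelian sections is exactly the crux, and it is precisely the small-class count of the first step that constrains how the action of $G$ on $N$ can behave.

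Finally, granting a normal $T\trianglelefteq G$ with $[G:T]$ and $b:=|[T,T]|$ both $\epsilon$-bounded, I would set $R:=C_T([T,T])$. Since $[T,T]\trianglelefteq G$ is finite of order $b$, conjugation gives an embedding $T/R\hookrightarrow\aut([T,T])$, so $[T:R]\le|\aut([T,T])|$ is $\epsilon$-bounded and hence so is $[G:R]$; moreover $R$ is normal in $G$ because both $T$ and $[T,T]$ are invariant under $G$. As $R\le T$ we have $[R,R]\le[T,T]$, so $|[R,R]|\le b$ is $\epsilon$-bounded, and $R$ centralizes $[T,T]\supseteq[R,R]$, whence $[R,R]\le Z(R)$ and $R$ has nilpotency class at most $2$. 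This $R$ is the required subgroup, completing the proof modulo the extraction of $T$.
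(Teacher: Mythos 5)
Your opening Markov estimate and your closing step (setting $R=C_T([T,T])$ and using the embedding $T/R\hookrightarrow\aut([T,T])$) are both correct, and they in fact bookend the real proof as well. The genuine gap is the middle: producing a normal subgroup $T$ of $\epsilon$-bounded index with $|[T,T]|$ $\epsilon$-bounded is exactly what you do not prove. Your proposed induction via Gallagher's inequality leaves the case of a large elementary abelian minimal normal subgroup open --- you say yourself it is ``exactly the crux'' --- so the proposal is a reduction of the theorem to its hardest case, not a proof. There is also a quantitative defect in the induction itself: pulling a class-$2$ subgroup back from $G/N$ multiplies the bound on the derived subgroup by $|N|$ and then requires a further centralizer step, so the bound degrades at every stage and the induction does not close as stated.

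The missing idea is that your ``naive candidate'' $T=\langle S\rangle$ does work, because of Lemma \ref{lem} (Eberhard \cite[Lemma 2.1]{eberhard}): if $X$ is a symmetric subset of $G$ containing the identity and $(r+1)|X|>|G|$, then $\langle X\rangle=X^{3r}$. Your set $S=\{x\in G : |x^G|\le 2/\epsilon\}$ is symmetric, contains $1$, is invariant under conjugation (so $\langle S\rangle$ is normal in $G$), and satisfies $|S|>(\epsilon/2)|G|$; taking $r$ roughly $2/\epsilon$, every element of $\langle S\rangle$ is a product of at most $6/\epsilon$ elements of $S$, whence $|y^G|\le(2/\epsilon)^{6/\epsilon}$ for every $y\in\langle S\rangle$. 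Thus the word length is bounded, your objection about classes growing with $\ell$ evaporates, and $T=\langle S\rangle$ is a normal subgroup of index at most $2/\epsilon$ all of whose conjugacy classes have $\epsilon$-bounded size; B.~H.~Neumann's BFC theorem \cite{bhn} (in quantitative form \cite{wie,gumaroti}) then gives $|[T,T]|$ $\epsilon$-bounded, and your final paragraph finishes the proof. This is precisely the paper's route: Proposition \ref{main} is the general-$K$ version of this argument, where the failure of $\langle X\rangle$ to be normal forces two separate sets $X$ and $Y$ and the stronger BFC-type theorem of Acciarri--Shumyatsky (Theorem \ref{cristi}); in your case $K=G$, the single set $S$ and the classical theorem suffice.
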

Throughout the article we use the expression ``$(a,b,\dots)$-bounded" to mean that a quantity is bounded from above by a number depending only on the parameters $a,b,\dots$.

If $K$ is a subgroup of $G$, write $$Pr(K,G)=\frac{|\{(x,y)\in K\times G\ :\ xy=yx \}|}{|K||G|}.$$ This is the probability that an element of $G$ commutes with an element of $K$ (the relative commutativity degree of 
$K$ in $G$).

This notion has been studied in several recent papers (see in particular \cite{lescot3,nath}). Here we will prove the following proposition.

\begin{proposition}\label{main} Let $\epsilon>0$, and let $G$ be a finite 
group having a subgroup $K$ such that $Pr(K,G)\geq\epsilon$. Then there is a normal subgroup $T\leq G$ and a subgroup $B\leq K$ such that the indexes $[G:T]$ and $[K:B]$, and the order of the commutator subgroup $[T,B]$ 
are $\epsilon$-bounded.
\end{proposition}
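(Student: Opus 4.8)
The plan is to extract from the hypothesis a large subset of $K$ consisting of elements with small $G$-conjugacy classes, and dually a large subset of $G$ consisting of elements with small $K$-conjugacy classes; the first produces $B$, the second produces $T$, and the remaining work is to bound $|[T,B]|$. First I would rewrite $Pr(K,G)=\frac{1}{|K||G|}\sum_{x\in K}|C_G(x)|=\frac{1}{|K|}\sum_{x\in K}\frac{1}{|x^{G}|}$. Setting $m=\lceil 2/\epsilon\rceil$ and $L=\{x\in K:\ |x^{G}|\le m\}$, and splitting the sum over $L$ and its complement, gives $|L|\ge\tfrac{\epsilon}{2}|K|$. Put $B=\langle L\rangle$. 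Since any subgroup of $K$ of order at least $\tfrac{\epsilon}{2}|K|$ has index at most $2/\epsilon$, we get $[K:B]\le 2/\epsilon$, the first of the three required bounds; and by construction $B$ is generated by elements $x$ with $[G:C_G(x)]\le m$.

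Next I would produce $T$ from the symmetric expression $Pr(K,G)=\frac{1}{|G|}\sum_{y\in G}\frac{1}{[K:C_K(y)]}$. The same argument shows that $L'=\{y\in G:\ [K:C_K(y)]\le m\}$ satisfies $|L'|\ge\tfrac{\epsilon}{2}|G|$, so $T_0=\langle L'\rangle$ has $[G:T_0]\le 2/\epsilon$. I then replace $T_0$ by its normal core $T=\mathrm{Core}_G(T_0)$: as $[G:T_0]$ is $\epsilon$-bounded, so is $[G:T]\le [G:T_0]!$, and $T$ is normal in $G$. This yields the second required bound, $[G:T]$.

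The crux is the third bound, $|[T,B]|$. Since $T$ is normal we have $[T,B]\le T$, and for each generator $x\in L$ the map $t\mapsto [x,t]=x^{-1}x^{t}$ takes at most $m$ values, because $x^{t}\in x^{G}$. Thus $B$ and $T$ satisfy a bounded mutual conjugacy condition on generators, and the goal becomes a relative analogue of B. M. Neumann's theorem on BFC groups: such data should force $[T,B]$ to be finite of $\epsilon$-bounded order. I would bound the number of distinct commutator values $[x,t]$ together with their spread under conjugation by $B$, and then appeal to the quantitative principle that boundedly many commutators generate a subgroup of bounded order.

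I expect this last step to be the main obstacle, and it genuinely requires more than the existence of the generating set $L$. Indeed, for $G=Q_8^{n}$ the non-central elements all have class size at most $2$ and generate $G$, yet $[T,G]$ is unbounded for every bounded-index $T$; what fails there is that such elements occupy a vanishing fraction of $G$, so $Pr$ is small. The rescue is precisely the density $|L|\ge\tfrac{\epsilon}{2}|K|$, equivalently a lower bound on $Pr(B,G)$, which must be fed into the commutator count so that the (in general unbounded) index $[G:C_G(B)]$ never enters. Carrying out this Neumann-style averaging, rather than naively intersecting the centralizers $C_G(x)$, is the technical heart of the argument.
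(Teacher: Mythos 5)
Your construction of $B$ and $T$ is exactly the paper's (threshold the class sizes at $2/\epsilon$, take the generated subgroups, and pass to the normal core to get $T$), but the third step --- bounding $|[T,B]|$ --- is precisely where the paper's real work lies, and your proposal leaves it as an acknowledged obstacle rather than proving it. Two concrete things are missing. First, you only control $|x^G|$ for the \emph{generators} $x\in L$ of $B$ (and dually for the generators of $T_0$). The paper converts this into a bound for \emph{all} elements before doing anything else: since $X$ is symmetric, contains $1$, and satisfies $(2/\epsilon)|X|>|K|$, Lemma \ref{lem} (Eberhard) shows that every element of $B$ is a product of at most $6/\epsilon$ elements of $X$, whence $|b^G|\le(2/\epsilon)^{6/\epsilon}$ for every $b\in B$; the same is done for $Y$ and $E\supseteq T$. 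This is the precise mechanism by which the density $|X|>(\epsilon/2)|K|$ enters; your proposal correctly senses that density is the rescue (your $Q_8^n$ example makes the right point, although as stated the claim that all non-central elements have class size at most $2$ is false --- only elements non-central in a single coordinate do), but it never performs this step. Second, the principle you propose to finish with, that ``boundedly many commutators generate a subgroup of bounded order,'' is false: two involutions generate dihedral groups of arbitrarily large order, so bounded generation plus bounded orders of generators bounds nothing without additional structure.

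The structure the paper supplies is the following, and none of it is routine. With $|b^G|$ bounded for all $b\in B$, Theorem \ref{cristi} (the Acciarri--Shumyatsky strengthening of B.~H.~Neumann's BFC theorem) gives that $[L,L]$ has $\epsilon$-bounded order, where $L=\langle B^G\rangle$; Lemma \ref{quoti} shows that the hypothesis $Pr(K,G)\ge\epsilon$ survives passage to $G/[L,L]$, so one may assume $\langle B^G\rangle$ is \emph{abelian}; only then is $T$ constructed (inside this quotient), and the bound on $|[T,B]|$ comes from the Baer-type Lemma \ref{lem2}, whose proof in turn rests on Lemma \ref{normal}, where both subgroups are normal. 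Your instinct that a ``relative BFC theorem'' is what is needed is correct, but that theorem is exactly the content of Theorem \ref{cristi} together with Lemmas \ref{normal} and \ref{lem2}; without these ingredients, or some substitute for them, the proposal does not close.
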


Theorem \ref{neumann} can be easily obtained from the above result taking 
$K=G$. 

Proposition \ref{main} has some interesting consequences. In particular, we will establish the following results. 

Recall that the generalized Fitting subgroup $F^*(G)$ of a finite group $G$ is the product of the Fitting subgroup $F(G)$ and all subnormal quasisimple subgroups; here a group is quasisimple if it is perfect and its quotient by the centre is a non-abelian simple group. Throughout, by a class-$c$-nilpotent group we mean a  nilpotent group whose nilpotency class is 
at most $c$.

\begin{theorem}\label{fitt} 
Let $G$ be a finite group such that $Pr(F^*(G),G)\geq\epsilon$. Then $G$ has a class-$2$-nilpotent normal subgroup $R$ such that both the index $[G:R]$ and the order of the commutator subgroup $[R,R]$ are $\epsilon$-bounded. 
\end{theorem}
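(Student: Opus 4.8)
The plan is to apply Proposition~\ref{main} with $K=F^*(G)$ and then repackage the output as a class-$2$-nilpotent normal subgroup. Applying the proposition, I obtain a normal subgroup $T\le G$ and a subgroup $B\le F^*(G)$ with $[G:T]$, $[F^*(G):B]$ and $m:=|[T,B]|$ all $\epsilon$-bounded. Replacing $B$ by $B\cap T$, I may assume $B\le T$: this keeps $[F^*(G):B]$ $\epsilon$-bounded (since $[F^*(G):F^*(G)\cap T]\le[G:T]$) and gives $|B'|\le|[T,B]|=m$, because $[B,B]\le[T,B]$. So from now on $B\le T\cap F^*(G)$ has $\epsilon$-bounded index $\le d$ in $F^*(G)$ and $|B'|\le m$.

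The first real step is to bound the layer $E:=E(G)$, so that $F^*(G)=F(G)E$. As $E\le F^*(G)$, the subgroup $B\cap E$ has $\epsilon$-bounded index in $E$ and $|(B\cap E)'|\le m$. Passing to $\ol E:=E/Z(E)=\ol Q_1\times\cdots\times\ol Q_k$, a direct product of nonabelian simple groups, the image $U$ of $B\cap E$ has index $\le d$ and $|U'|\le m$. For each $i$, either the projection $\pi_i(U)$ is proper in $\ol Q_i$, so $\ol Q_i$ embeds in the symmetric group of degree $\le d$ and $|\ol Q_i|\le d!$, or $\pi_i(U)=\ol Q_i$, so $\ol Q_i=\ol Q_i{}'=\pi_i(U')$ has order $\le|U'|\le m$; thus every component has $\epsilon$-bounded order. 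By the standard analysis of subgroups of products of nonabelian simple groups, only $\epsilon$-boundedly many factors can occur: the factors contained in $U$ contribute pairwise-commuting nonabelian direct factors to $U'$ (bounding their number by $m$), while the factors $\ol Q_i$ with $U\not\ge\ol Q_i$ force index $[\,\ol E:U\,]\ge\prod_i[\ol Q_i:\pi_i(U)]$ together with diagonal gluings, each costing index $\ge 60$. Hence $k$ is $\epsilon$-bounded, $|\ol E|$ is $\epsilon$-bounded, and since the Schur multipliers of the $\ol Q_i$ are bounded in terms of their orders, $|E|$ is $\epsilon$-bounded.

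With $|E|$ $\epsilon$-bounded, $C:=C_G(E)$ is normal of $\epsilon$-bounded index, since $G/C$ embeds in $\aut(E)$. No component of $G$ lies in $C$ (it would centralize itself), so $E(C)=1$ and $F^*(C)=F(C)$ is nilpotent; as $[G:C]$ and $|E|$ are $\epsilon$-bounded, the hypotheses pass, with an adjusted $\epsilon$, to $C$. Replacing $G$ by $C$, I may therefore assume $F:=F^*(G)=F(G)$ is nilpotent and self-centralizing, $C_G(F)=Z(F)\le F$, while still (re-applying Proposition~\ref{main}) having a normal $T$ of $\epsilon$-bounded index and $B\le F\cap T$ of $\epsilon$-bounded index in $F$ with $|[T,B]|\le m$. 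The task now reduces to producing a normal subgroup $S\le G$ of $\epsilon$-bounded index with $|S'|$ $\epsilon$-bounded. Here the self-centralizing property is decisive: the conjugation map $\rho\colon T\to\aut(F)$ has kernel $C_T(F)=Z(F)\cap T$, which is abelian, so $G$ is governed by its action on $F$; since $B$ has bounded index in $F$ and $[T,B]$ is small, $\rho(T)$ acts almost trivially on $F$, and I expect to bound the commutators of $T$, modulo a bounded central part, in terms of $m$ and $[F:B]$. Converting ``$T$ almost centralizes the bounded-index subgroup $B$ of the self-centralizing $F$'' into ``a bounded-index normal subgroup of $G$ has bounded derived subgroup'' is the step I expect to be the main obstacle, precisely because the commutators of $T$ that fall into the abelian kernel $Z(F)\cap T$ are not obviously few and must be controlled separately.

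Granting such an $S$, the theorem follows by a standard device. Put $R:=C_S(S')$. Then $R$ is characteristic in $S$, hence normal in $G$; its index satisfies $[S:R]\le|\aut(S')|\le|S'|!$, which is $\epsilon$-bounded, so $[G:R]$ is $\epsilon$-bounded; and $R'\le S'$ has $\epsilon$-bounded order. Finally $R$ centralizes $S'\supseteq R'$, so $R'\le Z(R)$ and $R$ is class-$2$-nilpotent, which is exactly the required subgroup.
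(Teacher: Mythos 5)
Your proposal is incomplete at its decisive point, and you say so yourself: after reducing to the case where $F^*(G)=F(G)$ is nilpotent and self-centralizing, you only state that you ``expect'' to convert the data ($T$ normal of bounded index, $B\le F\cap T$ of bounded index in $F$, $|[T,B]|$ bounded) into a normal subgroup $S$ of bounded index with $|S'|$ bounded, and you correctly identify the commutators of $T$ landing in the abelian kernel $C_T(F)=Z(F)\cap T$ as the obstruction. This is not a technicality that routine effort closes: for instance, the three subgroups lemma applied to $[T_0,B]\le N$, $[N,T_0]=1$ (with $T_0=C_T(N)$, $N=\langle[T,B]^G\rangle$ as in Remark \ref{remark}) only yields $[T_0,T_0]\le C_G(B)$, and $C_G(B)$ need not be small even when $C_G(F)\le F$ and $[F:B]$ is bounded, since $C_G(B)/C_G(F)$ can be large (it is controlled by homomorphisms $F/B\to F$, not by $|F/B|$ alone). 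So the heart of the theorem is missing. There is also a smaller unproved step earlier: ``the hypotheses pass, with an adjusted $\epsilon$, to $C=C_G(E)$'' needs an argument — it follows from the monotonicity result \cite[Theorem 3.7]{lescot3} (note $F^*(C)=F(G)\cap C\le F^*(G)$) together with $|C_C(x)|\ge|C_G(x)|/[G:C]$, giving $Pr(F^*(C),C)\ge\epsilon/[G:C]^2$ up to constants — fixable, but you should have said it.

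The paper closes the gap you ran into by a quite different mechanism, which also makes your entire reduction of the layer $E(G)$ unnecessary. Since $K=F^*(G)$ is normal, Remark \ref{remark2} lets one choose $T$ so that $K\cap T\le Z_3(T)$ (this is a stabilizer-of-a-series argument using that $[K:\langle B^G\rangle]$ and $|N|$ are bounded). Then two facts about the generalized Fitting subgroup from \cite{hb3} take over: $K\cap T=F^*(T)$ because $T$ is normal, so $F^*(T)\le Z_3(T)$; and since $C_T(F^*(T))\le F^*(T)$, the containment $F^*(T)\le Z_3(T)$ forces $T=F^*(T)$, i.e.\ $T$ is nilpotent and $T\le K$. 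Hence $[G:F^*(G)]$ is already $\epsilon$-bounded — exactly the conclusion your ``action on $F$'' strategy was groping for, obtained without ever separating $F(G)$ from the layer — and the theorem follows by taking $R=\langle B^G\rangle\cap C_G(N)$, which has bounded index, centralizes $N$, and has $[R,R]\le N$. The lesson is that the self-centralizing property of $F^*$ should be applied to the normal subgroup $T$ produced by Proposition \ref{main} (after upgrading it via Remark \ref{remark2}), rather than used to transfer the problem to $\aut(F)$.
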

A somewhat surprising aspect of the above theorem is that information on the commuting probability of a subgroup (in this case $F^*(G)$) enables one to draw a conclusion about $G$ as strong as in P. M. Neumann's theorem. Yet, several other results with the same conclusion will be established in this paper. 

Our next theorem deals with the case where $K$ is a subgroup containing $\gamma_i(G)$ for some $i\geq1$. Here and throughout the paper $\gamma_i(G)$ denotes the $i$th term of the lower central series of $G$.

\begin{theorem}\label{main1} Let $\epsilon>0$, and let $K$ be a subgroup of a finite group $G$ containing $\gamma_i(G)$ for some $i\geq1$. Suppose 
that $Pr(K,G)\geq\epsilon$. Then $G$ has a  nilpotent normal subgroup $R$ 
of nilpotency class at most $i+1$ such that both the index $[G:R]$ and the order of $\gamma_{i+1}(R)$ are $\epsilon$-bounded.
\end{theorem}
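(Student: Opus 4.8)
The plan is to deduce the statement from Proposition \ref{main}, feeding in the extra hypothesis $\gamma_i(G)\le K$ and upgrading its conclusion — which controls only the single commutator $[T,B]$ — to control the whole term $\gamma_{i+1}$ of a suitable normal subgroup. First I would apply Proposition \ref{main} to get a normal subgroup $T\le G$ and a subgroup $B\le K$ with $[G:T]$, $[K:B]$ and $|[T,B]|$ all $\epsilon$-bounded, and put $N=B\cap\gamma_i(G)$. Since $\gamma_i(G)\le K$ and $[K:B]$ is $\epsilon$-bounded, the index $[\gamma_i(G):N]$ is $\epsilon$-bounded and $[T,N]\le[T,B]$ is $\epsilon$-bounded. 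The guiding identity is that if one can produce a normal subgroup $R\le T$ of $\epsilon$-bounded index with $\gamma_i(R)\le N$, then $\gamma_{i+1}(R)=[\gamma_i(R),R]\le[N,T]\le[T,B]$ is automatically $\epsilon$-bounded; so the real task is to push $\gamma_i(R)$ inside the ``commuting part'' $N$ while keeping $R$ large.

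The first key step makes this data normal in $G$. Set $L=[T,N]$. As $L$ is normal in $\langle T,N\rangle$ and $T$ is normal in $G$, the subgroup $T$ normalises $L$, so $L$ has at most $[G:T]$ conjugates in $G$; each conjugate $L^{g}=[T,N^{g}]$ lies in $T$ and is normalised by $T$, hence is normal in $T$. Therefore the normal closure $\widehat L=\langle L^{G}\rangle$ is a product of $\epsilon$-boundedly many normal subgroups of $T$, each of $\epsilon$-bounded order, and so $|\widehat L|$ is $\epsilon$-bounded. Writing $\widehat N=\langle N^{G}\rangle$, one checks that $\widehat N\le\gamma_i(G)$, that $[\gamma_i(G):\widehat N]$ is $\epsilon$-bounded, and that $[T,\widehat N]=\widehat L$. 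Passing to $\overline G=G/\widehat L$ we reach the clean situation $[\overline T,\overline{\widehat N}]=1$, with $\overline{\widehat N}$ normal of $\epsilon$-bounded index in $\gamma_i(\overline G)$.

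Now I would build $R$ by centralising the bounded section $\gamma_i(\overline G)/\overline{\widehat N}$: the subgroup $\overline R=C_{\overline T}\bigl(\gamma_i(\overline G)/\overline{\widehat N}\bigr)$ has $\epsilon$-bounded index, and since $\gamma_i(\overline R)\le\gamma_i(\overline G)$ one gets $\gamma_{i+1}(\overline R)=[\gamma_i(\overline R),\overline R]\le\overline{\widehat N}$. As $\overline R\le\overline T$ centralises $\overline{\widehat N}$, this term is central in $\overline R$, so $\overline R$ is already nilpotent of class at most $i+1$. Lifting back to $G$ and then intersecting with the centraliser of the (bounded) subgroup $\gamma_{i+1}$ — exactly the device by which Theorem \ref{neumann} is obtained from Proposition \ref{main} — would yield a normal $R\le G$ of $\epsilon$-bounded index, nilpotent of class at most $i+1$, provided $\gamma_{i+1}(\overline R)$ has $\epsilon$-bounded order; taking normal cores throughout secures normality in $G$.

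The hard part will be precisely this proviso: bounding $|\gamma_{i+1}(\overline R)|$. Since $\gamma_{i+1}(\overline R)$ is central in $\overline R$ and trivial modulo $\overline{\widehat N}$, it is generated by the $\epsilon$-boundedly many commutators $[a,\overline R]$ as $a$ ranges over a transversal of $\overline{\widehat N}$ in $\gamma_i(\overline R)$, and for each such $a$ the map $\overline r\mapsto[a,\overline r]$ is a homomorphism whose image has order $|a^{\overline R}|$. Thus everything reduces to choosing these transversal elements — elements of $\gamma_i(G)$ — with $\epsilon$-bounded conjugacy classes. This is where the probabilistic hypothesis must be re-invested: from $Pr(K,G)=\tfrac1{|K|}\sum_{x\in K}|x^{G}|^{-1}\ge\epsilon$ one forces an $\epsilon$-bounded-below proportion of $K$ to have $\epsilon$-bounded conjugacy class, which is the input that bounds $|\gamma_{i+1}(\overline R)|$. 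Making the passage from ``a positive proportion of elements have small class'' to ``$\gamma_i(G)$ is generated modulo $\widehat N$ by small-class elements'' watertight — perhaps via an induction on $i$, whose base case $i=1$ is Theorem \ref{neumann} — is the step I expect to demand the most care.
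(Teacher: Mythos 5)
Your reduction steps are essentially correct and parallel the paper's own auxiliary devices: the boundedness of $\langle[T,N]^G\rangle$ is Remark \ref{remark}, and centralizing the bounded section $\gamma_i(\overline G)/\overline{\widehat N}$ to obtain a normal subgroup of class at most $i+1$ and bounded index is close in spirit to Remark \ref{remark2}. You have also correctly isolated the crux: everything hinges on bounding $|\gamma_{i+1}(\overline R)|$. The problem is that this crux is not an elementary finishing step; it is exactly the point where the paper invokes a substantial external theorem, namely Theorem \ref{glas} from \cite{glasgow}: if $|x^{\gamma_k(G)}|\le n$ for every $x\in G$, then $|\gamma_{k+1}(G)|$ is $(k,n)$-bounded. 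With it, the paper's proof is six lines: reduce to $K=\gamma_i(G)$ by \cite[Theorem 3.7]{lescot3}, apply Proposition \ref{main}, observe that $|x^{\gamma_i(G)}|\le[\gamma_i(G):B]\cdot|[T,B]|$ is $\epsilon$-bounded for every $x\in T$, apply Theorem \ref{glas} to the group $T$ (noting $\gamma_i(T)\le\gamma_i(G)$) to get $|\gamma_{i+1}(T)|$ bounded, and set $R=C_T(\gamma_{i+1}(T))$. Your construction in fact arrives at precisely the hypothesis of this theorem --- every element of $T$ (hence of $\overline R$) has $\epsilon$-bounded $\gamma_i$-conjugacy class, since $|x^N|\le|[T,N]|$ and $[\gamma_i(G):N]$ is bounded --- but instead of a result of this type you propose an elementary substitute (find a transversal of $\overline{\widehat N}$ in $\gamma_i(\overline R)$ consisting of elements with bounded classes), and that substitute cannot be repaired.

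Here is why, concretely. First, the probabilistic counting cannot supply the representatives you need: the elements of $K$ certified to have small class form the set $X=\{x\in K:|x^G|\le 2/\epsilon\}$, which generates $B$; hence every element of $\gamma_i(G)$ of class at most $2/\epsilon$ already lies in $B\cap\gamma_i(G)=N\le\widehat N$, so the nontrivial cosets of $\widehat N$ contain no elements that the counting controls, and ``re-investing'' $Pr(K,G)\ge\epsilon$ produces nothing new. Second, and decisively, the structural data you end with is genuinely insufficient to bound the commutator: there exist groups $H$ with an abelian normal subgroup $A$ such that $[A:A\cap Z(H)]=p$ and $|x^{A}|\le p$ for \emph{all} $x\in H$, yet $|[A,H]|$ is arbitrarily large. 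Indeed, let $Z\cong(\Z/p\Z)^n$, let $A=Z\times\langle a\rangle$ with $a^p=1$, and let $H=AT$ be the split extension in which $T\cong Z$ fixes $Z$ pointwise and acts by $a^t=az_t$, where $t\mapsto z_t$ is an isomorphism from $T$ onto $Z$; then $A\cap Z(H)=Z$ has index $p$, every element of $H$ has at most $p$ conjugates under $A$, but $[A,H]=Z$ has order $p^n$, and every element of the coset $aZ$ has $p^n$ conjugates in $H$, so no choice of transversal can have bounded classes. Thus any argument using only ``central part of bounded index plus one-sided class bounds'' fails; what makes the bound true is that $A=\gamma_i(H)$ is a term of the lower central series, and extracting a bound from that is exactly the content of Theorem \ref{glas} --- a BFC-type theorem whose proof is a separate paper, not a transversal argument. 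Finally, your fallback of induction on $i$ does not get started: by \cite[Theorem 3.7]{lescot3} the hypothesis passes from $K$ down to the \emph{smaller} subgroup $\gamma_i(G)$, not upward, so $Pr(\gamma_i(G),G)\ge\epsilon$ gives no lower bound on $Pr(\gamma_{i-1}(G),G)$ and the inductive hypothesis can never be invoked.
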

P. M. Neumann's theorem is a particular case of the above result (take $i=1$).

In the same spirit, we conclude that $G$ has a nilpotent subgroup of $\epsilon$-bounded index if $K$ is a verbal subgroup corresponding to a word implying virtual nilpotency such that $Pr(K,G)\geq\epsilon$. Given a group-word $w$, we write $w(G)$ for the corresponding verbal subgroup of a group $G$, that is the subgroup generated by the values of $w$ in $G$. 
Recall that a group-word $w$ is said to imply virtual nilpotency if every 
finitely generated metabe\-li\-an group $G$ where $w$ is a law, that is $w(G)=1$, has a nilpotent subgroup of finite index. Such words admit several important characterizations (see \cite{black,bume,groves}). 
 In particular, by a result of Gruenberg~\cite{Gr53}, all Engel words imply virtual nilpotency. Burns and Medvedev proved that for any word $w$ implying virtual nilpotency there exist integers $e$ and $c$ depending only on $w$ such that every finite group $G$, in which $w$ is a law, has a class-$c$-nilpotent normal subgroup $N$ such that $G^e\leq N$ \cite{bume}. Here $G^e$ denotes the subgroup generated by all $e$th powers of elements of $G$. Our next theorem provides a probabilistic variation of this result.

\begin{theorem}\label{virtunil} Let $w$ be a group-word implying virtual nilpotency. Suppose that $K$ is a subgroup of a finite group $G$ such that $w(G)\leq K$ and $Pr(K,G)\geq\epsilon$. There is an $(\epsilon,w)$-bounded integer $e$ and a $w$-bounded integer $c$ such that $G^e$ is nilpotent of class at most $c$.
\end{theorem}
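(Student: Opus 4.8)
The plan is to combine two ingredients: the theorem of Burns and Medvedev, which converts the hypothesis that $w$ implies virtual nilpotency into explicit control of the lower central series of a power subgroup, and Proposition \ref{main}, which extracts structure from the commuting probability. First I would note that $w(G)$, being a verbal and hence normal subgroup, makes $w$ a law on $G/w(G)$. Applying the Burns–Medvedev theorem to $G/w(G)$ produces $w$-bounded integers $e_0$ and $c_0$ with $\gamma_{c_0+1}(G^{e_0})\le w(G)\le K$. Write $R=\gamma_{c_0+1}(G^{e_0})$; this is a normal subgroup of $G$ contained in $K$.

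Next I would apply Proposition \ref{main} to the pair $(K,G)$, obtaining $T\trianglelefteq G$ and $B\le K$ with $n_1=[G:T]$, $n_2=[K:B]$ and $n_3=|[T,B]|$ all $\epsilon$-bounded. Since $R\le K$, the subgroup $R\cap B$ has index at most $n_2$ in $R$, so $R/\mathrm{Core}_R(R\cap B)$ embeds in a symmetric group on at most $n_2$ points; hence every $n_2!$-th power of an element of $R$ lies in $R\cap B$, giving $R^{n_2!}\le B$. As $R^{n_2!}$ is characteristic in $R\trianglelefteq G$, it is normal in $G$, and $Y:=[R^{n_2!},T]\le[B,T]$ has $\epsilon$-bounded order and is normal in $G$.

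I would then set $e=e_0\cdot n_1!$, so that $G^{e}\le T$ and $\gamma_{c_0+1}(G^{e})\le R$. Passing to $\overline G=G/Y$ and putting $\Gamma=\overline{G^{e}}$, we have $\Gamma\le\bar T$ and $\gamma_{c_0+1}(\Gamma)\le\overline R$, so $\gamma_{c_0+1}(\Gamma)^{n_2!}\le\overline{R^{n_2!}}$; since $[\bar T,\overline{R^{n_2!}}]=\overline{[T,R^{n_2!}]}=1$, the group $\Gamma$ centralizes $\gamma_{c_0+1}(\Gamma)^{n_2!}$, whence $\gamma_{c_0+1}(\Gamma)^{n_2!}\le Z(\Gamma)$. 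The goal now is to pass to a further $(\epsilon,w)$-bounded power $\Gamma^{s}$ for which $\gamma_{c_0+1}(\Gamma^{s})\le\gamma_{c_0+1}(\Gamma)^{n_2!}\le Z(\Gamma)$; this forces $\gamma_{c_0+1}(\Gamma^{s})\le Z(\Gamma^{s})$, so $\Gamma^{s}$ is nilpotent of class at most $c_0+1$. Lifting through the $\epsilon$-bounded normal subgroup $Y$ gives $\gamma_{c_0+2}(G^{es})\le Y$ of $\epsilon$-bounded order; replacing $G^{es}$ by the centralizer of $Y$ inside it (of $\epsilon$-bounded index, since the action on $Y$ lands in $\mathrm{Aut}(Y)$) and taking one more bounded power then yields $G^{e'}$ nilpotent of class at most $c_0+2$, with $e'$ being $(\epsilon,w)$-bounded and $c_0+2$ being $w$-bounded.

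The hard part will be the step $\gamma_{c_0+1}(\Gamma^{s})\le\gamma_{c_0+1}(\Gamma)^{n_2!}$. The difficulty is genuine and structural: knowing that a bounded-index, or power, subgroup such as $R^{n_2!}$ of $\gamma_{c_0+1}$ is almost central does \emph{not} bound $[\gamma_{c_0+1},\Gamma]$ itself, because the coset representatives of $R^{n_2!}$ in $R$ contribute uncontrolled commutators. Moreover the exponent $n_2!$ depends on $\epsilon$, so the residual, non-central part of $\gamma_{c_0+1}(\Gamma)$ must be killed by \emph{raising to a power} rather than by bounding its order — bounding its order would make the final nilpotency class depend on $\epsilon$ and contradict the requirement that $c$ be $w$-bounded. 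Making this precise calls for a lemma, presumably obtained from the Hall–Petrescu collection formula, asserting that for finite groups the $k$-th lower central term of a suitable bounded power subgroup is contained in the $m$-th power of the $k$-th lower central term; controlling this interaction between lower central terms and power subgroups is where I expect the technical heart of the proof to lie.
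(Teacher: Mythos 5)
Your steps through the construction of $Y=[R^{n_2!},T]$ and the reduction to the inclusion $\gamma_{c_0+1}(\Gamma^{s})\le\gamma_{c_0+1}(\Gamma)^{n_2!}$ all check out (including the final lifting through $Y$), but the lemma you hope would supply that inclusion is not merely hard: it is false, and no Hall--Petrescu-type collection argument can produce it. For any $k\ge 2$, any $m\ge 2$ and any candidate $s$, take $\Gamma=C_m\wr C_n$ with $n$ a prime dividing neither $s$ nor $m$. Then $\gamma_k(\Gamma)$ lies in the abelian base group $C_m^{\,n}$, so $\gamma_k(\Gamma)^m=1$ for every $k\ge2$. On the other hand, $\Gamma^{s}$ maps onto the top group $C_n$ (because $n\nmid s$), and, being normal in $\Gamma$, it contains $[\mathrm{base},\Gamma^{s}]$, which is the augmentation submodule $W$ of order $m^{n-1}$; moreover $\Gamma^s$ contains an element whose conjugation action on $W$ is fixed-point-free of order $n$, coprime to $m$. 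Hence $\Gamma^{s}$ is not even nilpotent, so $\gamma_k(\Gamma^{s})\neq 1=\gamma_k(\Gamma)^m$, whatever $s$ is. (This is the same phenomenon as in the lamplighter groups $C_m\wr\mathbb{Z}$: ``$\gamma_k$ has exponent dividing $m$'' is a law that does not imply virtual nilpotency.) The structural source of the failure is that your first step discards the virtual-nilpotency hypothesis too early: after applying Burns--Medvedev to $G/w(G)$ you retain only $\gamma_{c_0+1}(G^{e_0})\le K$, and the data you carry into the last step ($\Gamma\le\bar T$, $\gamma_{c_0+1}(\Gamma)\le\bar R$, $[\bar T,\bar R^{\,n_2!}]=1$) are all satisfied by groups of the wreath-product type above, so they cannot force any bounded power of $\Gamma$ to be nilpotent of $w$-bounded class.

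The paper resolves the tension you correctly diagnosed (the class must be $w$-bounded while only the power may be $(\epsilon,w)$-bounded) by a device your proposal never invokes, namely Remark~\ref{remark2}: since one may assume $K=w(G)$ is normal (\cite[Theorem 3.7]{lescot3}), the normal subgroup $T$ of Proposition~\ref{main} can be chosen, by passing to the stabilizer of the series $1\le N\le B_0\le K$, so that $K\cap T\le Z_3(T)$. Then $w(T)\le Z_3(T)$, i.e.\ $w$ is a law in $T/Z_3(T)$, and Burns--Medvedev is applied to \emph{this} quotient rather than to $G/w(G)$: it yields $w$-bounded integers $i$ and $c$ such that $T^{i}Z_3(T)/Z_3(T)$ is nilpotent of class at most $c$, whence $T^{i}$ is nilpotent of class at most $c+3$, and $G^{i[G:T]}\le T^{i}$. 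All the $\epsilon$-dependence is absorbed into the power $i[G:T]$, while the only price in the class is the absolute constant $3$ coming from $Z_3$; that is exactly how the class stays $w$-bounded. If you want to keep your architecture, replace your subgroup $R=\gamma_{c_0+1}(G^{e_0})$ and the unprovable power-versus-lower-central-term step by an appeal to Remark~\ref{remark2}; at that point your argument essentially becomes the paper's.
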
 
 
We also consider finite groups with a given value of $Pr(P,G)$, where $P$ 
is a Sylow $p$-subgroup of $G$.
\begin{theorem} \label{sylow}
Let $P$ be a Sylow $p$-subgroup of a finite group $G$ such that $Pr(P,G) \ge \epsilon$. Then $G$ has a class-$2$-nilpotent normal $p$-subgroup $L$ 
such that both the index $[P:L]$ and the order of $[L,L]$ are $\epsilon$-bounded. 
\end{theorem}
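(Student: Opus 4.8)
The plan is to derive the statement from Proposition \ref{main}, the only genuine difficulty being the passage from the pair $(T,B)$ it supplies to an honestly $G$-normal $p$-subgroup. First I would apply Proposition \ref{main} to the hypothesis $Pr(P,G)\ge\epsilon$, obtaining a normal subgroup $T\le G$ and a subgroup $B\le P$ with $[G:T]$, $[P:B]$ and $m:=|[T,B]|$ all $\epsilon$-bounded. Put $C=B\cap T$; then $C\le T$, the index $[P:C]\le[P:B]\,[G:T]$ is $\epsilon$-bounded, and $[T,C]\le[T,B]$, so $|[T,C]|\le m$. In particular $C$ acts on $T$ by conjugation with all orbits of size at most $m$, and $[C,C]\le[T,C]$ has order at most $m$.

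Next I would trim the $p'$-radical by a coprime-action argument. Let $V=O_{p'}(T)$, a characteristic, hence $G$-normal, $p'$-subgroup, and note $[V,B]\le[T,B]$ has order at most $m$. Since $B$ is a $p$-group acting coprimely on the $p'$-group $V$, one has $V=[V,B]\,C_V(B)$, so $[V:C_V(B)]\le|[V,B]|\le m$; moreover $B/C_B(V)$ embeds into $\aut([V,B])$, whence $[B:C_B(V)]\le|\aut([V,B])|$ is $\epsilon$-bounded. Consequently $C_P(V)$ has $\epsilon$-bounded index in $P$. Setting $M=C_G(V)\trianglelefteq G$, its Sylow $p$-subgroup $P\cap M=C_P(V)$ has bounded index in $P$, we have $V\le Z(M)$, and the $p'$-core of $M$ differs from $V$ only by an $\epsilon$-bounded factor, so modulo $V$ the $p'$-radical has essentially been removed.

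It remains to locate inside $M$ a $G$-normal $p$-subgroup of $\epsilon$-bounded index in $P$ whose derived subgroup has $\epsilon$-bounded order, and this is the heart of the matter. Since the $p'$-core of $\ol M:=M/V$ is bounded, the normal $p$-subgroup $O_p(M)$ can fail to have bounded index in $C_P(V)$ only through quasisimple components of $\ol M$ of order divisible by $p$; I expect eliminating these to be the main obstacle. Such a component would contain a non-central $p$-element $x$ whose class in the component, hence in $G$, is large, whereas $C=B\cap T$ centralises $T$ modulo the bounded subgroup $[T,C]$ and therefore forces the $p$-elements meeting $T$ to have small $G$-classes; the same BFC-type counting that underlies Proposition \ref{main} then rules out $p$-divisible components up to bounded index. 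Carrying this out produces $L_0\trianglelefteq G$ that is a $p$-group, has $\epsilon$-bounded index in $P$, and — because $[L_0,L_0]$ is controlled by the bounded commutator $[T,C]$ — satisfies $|[L_0,L_0]|\le$ an $\epsilon$-bounded number.

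Finally I would upgrade $L_0$ to nilpotency class at most $2$ at the cost of bounded index. As $[L_0,L_0]$ has $\epsilon$-bounded order, $\aut([L_0,L_0])$ is $\epsilon$-bounded, so $L:=C_{L_0}([L_0,L_0])$ has bounded index in $L_0$, hence $\epsilon$-bounded index in $P$. Being characteristic in $L_0\trianglelefteq G$, the subgroup $L$ is normal in $G$; it is a $p$-group with $[L,L]\le[L_0,L_0]$ of bounded order; and since $L$ centralises $[L_0,L_0]\ge[L,L]$ we get $[L,L]\le Z(L)$, so $L$ is class-$2$-nilpotent. This $L$ meets all the requirements. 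The decisive step is the third paragraph: the coprime reduction and the centraliser trick are routine, whereas eliminating the $p$-divisible components is precisely where the hypothesis $Pr(P,G)\ge\epsilon$ must be fed back in.
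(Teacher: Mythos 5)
Your first two paragraphs and the final ``upgrade to class $2$'' step are sound, but the proof has a genuine gap exactly where you place ``the heart of the matter'': your third paragraph is a plan, not an argument, and its two key assertions do not hold as stated. First, the claim that $C=B\cap T$ ``forces the $p$-elements meeting $T$ to have small $G$-classes'' is a non sequitur: from $|[T,C]|\le m$ you get $x^C\subseteq x[T,C]$, hence $|x^C|\le m$ for $x\in T$, but $C$ is a $p$-group of unbounded index in $G$, so this bounds only the $C$-class of $x$, not its $G$-class. Second, ``the same BFC-type counting that underlies Proposition \ref{main} rules out $p$-divisible components'' is never carried out, and it cannot work in the form you state it: a component of bounded order divisible by $p$ (one copy of $A_5$ with $p=2$, say) is perfectly compatible with $Pr(P,G)\ge\epsilon$, so components cannot be ``ruled out''; what must be excluded is a single component of unbounded order, or unboundedly many components, and either case requires genuine quantitative facts about quasisimple groups (class sizes, or commuting probabilities of Sylow subgroups, in simple groups) --- input of a completely different nature from the elementary counting behind Proposition \ref{main}. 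So the subgroup $L_0$ is never actually produced. (A smaller slip: $V\le Z(M)$ presupposes $V$ abelian; for $V=O_{p'}(T)$ one only gets $V\cap M=Z(V)\le Z(M)$.)

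The idea you are missing makes all of this unnecessary. By Remark \ref{remark} the subgroup $N=\langle[T,B]^G\rangle$ has $\epsilon$-bounded order, so $C:=C_T(N)$ is a normal subgroup of $\epsilon$-bounded index in $G$. Put $B_0=B\cap C$. Then $[C,B_0]\le [T,B]\cap C\le N\cap C\le Z(C)$, because $C$ centralizes $N$; hence $B_0\le Z_2(C)$. Now $Z_2(C)$ is a nilpotent normal subgroup of $G$, so its Sylow $p$-subgroup lies in $O_p(G)$, and since $B_0$ is a $p$-group this gives $B_0\le O_p(G)$ --- this is how one lands inside a $G$-normal $p$-subgroup for free, with no coprime reduction and no analysis of components. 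Taking $L=\langle B_0^G\rangle\le O_p(G)\le P$ finishes the proof: $[P:L]\le[P:B_0]$ is $\epsilon$-bounded; $L\le C$ (as $C$ is normal), so $[L,B_0^g]\le[C,B_0]^g\le N$ for all $g\in G$ and hence $[L,L]\le N$ has $\epsilon$-bounded order; and $L$ centralizes $N\ge[L,L]$, so $L$ has class at most $2$. This is the paper's proof, and the contrast with your outline is instructive: the passage from the pair $(T,B)$ to an honestly $G$-normal $p$-subgroup is not the hard structural problem you anticipated, but a two-line consequence of the boundedness of the normal closure $N$.
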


Once we have information on the commuting probability of all Sylow subgroups of $G$, the result is as strong as in P. M. Neumann's theorem. 

\begin{theorem} \label{allsylow}
Let $\epsilon >0$, and let $G$ be a finite group such that $Pr(P,G) \ge \epsilon$ whenever $P$ is a Sylow subgroup. Then $G$ has a nilpotent normal subgroup $R$ of nilpotency class at most $2$ such that both the index $[G:R]$ and the order of the commutator subgroup $[R,R]$ are $\epsilon$-bounded. 
\end{theorem}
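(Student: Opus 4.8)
The plan is to reduce Theorem~\ref{allsylow} to Theorem~\ref{sylow} by handling one prime at a time and then amalgamating the resulting normal subgroups. For each prime $p$ dividing $|G|$ fix a Sylow $p$-subgroup $P_p$; since $Pr(P_p,G)\ge\epsilon$, Theorem~\ref{sylow} supplies a class-$2$-nilpotent normal $p$-subgroup $L_p\trianglelefteq G$ with $[P_p:L_p]$ and $|[L_p,L_p]|$ both bounded by a single function $f(\epsilon)$ depending only on $\epsilon$ (the bound is uniform in $p$ because it comes from $\epsilon$ alone). I would then set $R=\prod_p L_p$, the product over all primes dividing $|G|$, and argue that this $R$ satisfies all the required conclusions.

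First I would pin down the qualitative shape of $R$. Each $L_p$ is a normal $p$-subgroup, hence contained in every Sylow $p$-subgroup, in particular $L_p\le P_p$; and for distinct primes $p\ne q$ the normal subgroups $L_p,L_q$ have coprime orders, so $[L_p,L_q]\le L_p\cap L_q=1$ and they centralize one another. Therefore $R$ is the internal direct product of the $L_p$, it is normal in $G$, its nilpotency class is the maximum of the classes of the factors and hence at most $2$, and $[R,R]=\prod_p[L_p,L_p]$. Comparing $p$-parts of $|G|$ and $|R|$ also yields the clean index formula $[G:R]=\prod_p[P_p:L_p]$.

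The only point requiring an idea is to convert the uniform, but a priori infinitely many, bounds $[P_p:L_p]\le f(\epsilon)$ and $|[L_p,L_p]|\le f(\epsilon)$ into bounds on the two products above. The key observation is that both $[P_p:L_p]$ and $|[L_p,L_p]|$ are powers of $p$: whenever either one exceeds $1$ it is at least $p$, yet it is also at most $f(\epsilon)$, forcing $p\le f(\epsilon)$. Consequently the only primes contributing a nontrivial factor to $[G:R]$ or to $|[R,R]|$ lie below $f(\epsilon)$, so there are at most $f(\epsilon)$ of them, and each factor is at most $f(\epsilon)$. Hence $[G:R]$ and $|[R,R]|$ are both bounded by $f(\epsilon)^{f(\epsilon)}$, which is $\epsilon$-bounded, completing the proof.

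I expect the one delicate step to be exactly this bookkeeping: controlling how many primes can be ``bad''. Once the $p$-power nature of the relevant quantities is exploited it collapses to the elementary remark that a proper power of $p$ bounded by $f(\epsilon)$ forces $p\le f(\epsilon)$; everything else is the routine coprime-product structure of normal nilpotent subgroups and a single invocation of Theorem~\ref{sylow} per prime.
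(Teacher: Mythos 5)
Your proposal is correct and follows essentially the same route as the paper: apply Theorem~\ref{sylow} to one Sylow subgroup per prime, set $R=\prod_p L_p$, and use the fact that $[P_p:L_p]$ and $|[L_p,L_p]|$ are $p$-powers bounded independently of $p$, so that only $\epsilon$-boundedly many primes contribute nontrivially. Your write-up just makes explicit (via the index formula and the direct-product structure) the bookkeeping that the paper compresses into the remark that $S_p=L_p$ and $[L_p,L_p]=1$ for all $p$ exceeding an $\epsilon$-bounded constant.
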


If $\phi$ is an automorphism of a group $G$, the centralizer $C_G(\phi)$ is the subgroup formed by the elements $x\in G$ such that $x^\phi=x$. In the case where $C_G(\phi)=1$ the automorphism $\phi$ is called fixed-point-free. A famous result of Thompson \cite{thompson} says that a finite group admitting a fixed-point-free automorphism of prime order is nilpotent. Higman proved that for each prime $p$ there exists a number $h=h(p)$ depending only on $p$ such that whenever a nilpotent group $G$ admits 
a fixed-point-free automorphism of order $p$, it follows that $G$ has nilpotency class at most $h$ \cite{higman}. Therefore  a finite group admitting a fixed-point-free automorphism of order $p$ is nilpotent of class at most $h$. Khukhro obtained the following ``almost fixed-point-free" generalization of this fact \cite{khukhro}: if a finite group $G$ admits an automorphism $\phi$ of prime order $p$ such that $C_G(\phi)$ has order $m$, then 
$G$ has a nilpotent subgroup of $p$-bounded nilpotency class and $(m,p)$-bounded index. We will establish a probabilistic variation of the above results. Recall that an automorphism $\phi$ of a finite group $G$ is called coprime if  $(|G|,|\phi|)=1$. 

\begin{theorem}\label{auto} Let $G$ be a finite group admitting a coprime 
 automorphism $\phi$ of prime order $p$ such that $Pr(C_G(\phi),G)\geq\epsilon$. Then $G$ has a nilpotent subgroup of $p$-bounded nilpotency class 
and $(\epsilon,p)$-bounded index.
\end{theorem}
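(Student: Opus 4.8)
The plan is to combine Proposition \ref{main} with Khukhro's theorem on almost fixed-point-free automorphisms of prime order, the bridge between the two being a reduction to a section of $G$ on which $\phi$ has only boundedly many fixed points. Set $C=C_G(\phi)$. Since $Pr(C,G)\ge\epsilon$, Proposition \ref{main} supplies a normal subgroup $T\le G$ and a subgroup $B\le C$ with $[G:T]$, $[C:B]$ and $|[T,B]|$ all $\epsilon$-bounded. As $T$ need not be $\phi$-invariant, I would first replace it by $T_0=\bigcap_{i=0}^{p-1}T^{\phi^i}$, a $\phi$-invariant normal subgroup of $G$ whose index is $(\epsilon,p)$-bounded. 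Putting $B_0=B\cap T_0$ we have $B_0\le C_{T_0}(\phi)$ with $[C_{T_0}(\phi):B_0]\le[C:B]$ bounded, while $Z:=[T_0,B_0]\le[T,B]$ is a $\phi$-invariant normal subgroup of $T_0$ of $\epsilon$-bounded order.

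The key difficulty is that $C_G(\phi)$ itself is not bounded, so Khukhro's theorem cannot be applied to $G$, or even to $T_0$, directly. To get around this I would pass to quotients in which the fixed points of $\phi$ shrink to bounded size. Working in $\overline{T_0}=T_0/Z$ and using that $\phi$ acts coprimely, one has $C_{\overline{T_0}}(\phi)=\overline{C_{T_0}(\phi)}$, and the image $\overline{B_0}$ now lies in $Z(\overline{T_0})$ because $[T_0,B_0]=Z$; moreover $[C_{\overline{T_0}}(\phi):\overline{B_0}]$ remains bounded. Consequently, in the further quotient $\widehat{T}=\overline{T_0}/Z(\overline{T_0})$ the coprime-action formula gives $C_{\widehat{T}}(\phi)\cong C_{\overline{T_0}}(\phi)/(C_{\overline{T_0}}(\phi)\cap Z(\overline{T_0}))$, whose order is at most $[C_{\overline{T_0}}(\phi):\overline{B_0}]$ and hence $\epsilon$-bounded. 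Now Khukhro's theorem applies to $\widehat{T}$ and yields a nilpotent subgroup of $p$-bounded class and $(\epsilon,p)$-bounded index.

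It then remains to transport this subgroup back to $G$ through the two extensions, at the cost of a bounded increase in class and index. Pulling the nilpotent subgroup of $\widehat{T}$ back through the central quotient $\overline{T_0}\to\widehat{T}$ produces a nilpotent subgroup $\overline{N}\le\overline{T_0}$ of $p$-bounded class (the class rising by at most one) and bounded index. Its preimage $N_1\le T_0$ is an extension of $\overline{N}$ by the bounded subgroup $Z$. Passing to $M=C_{N_1}(Z)$, a subgroup of $N_1$ of $|Z|$-bounded index, the subgroup $M\cap Z$ becomes central in $M$ while $M/(M\cap Z)$ embeds in $\overline{N}$ and so is nilpotent of bounded class; hence $M$ is nilpotent of $p$-bounded class. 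As $[G:T_0]$ and all the intermediate indices are $(\epsilon,p)$-bounded, $M$ is a nilpotent subgroup of $G$ of $p$-bounded class and $(\epsilon,p)$-bounded index, which is the assertion. The essential obstacle is precisely the passage from a bounded relative commuting probability to a bounded number of fixed points on a large section; this is resolved by the two quotient steps, and the rest is the routine bookkeeping of coprime-action centralizers together with the nilpotency-preserving pullbacks.
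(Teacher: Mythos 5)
Your proposal is correct and takes essentially the same route as the paper: reduce to a $\phi$-invariant normal subgroup $T_0$ of bounded index, use the bounded order of the commutator of $T_0$ with $B$ to force $C(\phi)$ to have $\epsilon$-bounded order modulo a (second-)center, apply Khukhro's theorem to that quotient, and pull the nilpotent subgroup back at bounded cost in class and index. The only differences are bookkeeping: the paper forms $D=C_G\bigl(\langle[T_0,B]^G\rangle\bigr)\cap T_0$ and quotients by $Z_2(D)$ in a single step, whereas you quotient by $[T_0,B_0]$ and then by the center, and absorb the bounded normal subgroup via the centralizer $C_{N_1}(Z)$ at the end rather than at the start.
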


An even stronger conclusion will be derived about groups admitting an elementary abelian group of automorphisms of rank at least 2.

\begin{theorem}\label{auto2} Let $\epsilon>0$, and let $G$ be a finite group admitting an elementary abelian coprime group of automorphisms $A$ of 
order $p^2$ such that $Pr(C_G(\phi),G)\geq\epsilon$ for each nontrivial $\phi\in A$. Then $G$ has a class-$2$-nilpotent normal subgroup $R$ such that both the index $[G:R]$ and the order of $[R,R]$ are $(\epsilon,p)$-bounded.
\end{theorem}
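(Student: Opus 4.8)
The plan is to combine Proposition~\ref{main}, applied to each of the centralizers, with the standard generation property of a coprime action by a noncyclic abelian group. Write $A_1,\dots,A_{p+1}$ for the $p+1$ subgroups of order $p$ of $A$; since $\phi$ and $\langle\phi\rangle$ have the same centralizer, the hypothesis says precisely that $Pr(C_G(A_i),G)\geq\epsilon$ for every $i$, and coprimeness gives $G=\langle C_G(A_1),\dots,C_G(A_{p+1})\rangle$. In fact the sharper \emph{product} decomposition $G=C_G(A_1)\cdots C_G(A_{p+1})$ will be needed below. For each $i$, Proposition~\ref{main} produces a normal subgroup $T_i\leq G$ and a subgroup $B_i\leq C_G(A_i)$ with $[G:T_i]$, $[C_G(A_i):B_i]$ and $|[T_i,B_i]|$ all $\epsilon$-bounded. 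Intersecting the finitely many $T_i$ together with all their images under $A$, I obtain an $A$-invariant normal subgroup $T\leq G$ with $[G:T]$ being $(\epsilon,p)$-bounded; replacing each $T_i$ by $T$ only shrinks the relevant commutators.

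Next I localise inside $T$. Since $C_G(A_i)\cap T=C_T(A_i)$ has index at most $[G:T]$ in $C_G(A_i)$, the subgroup $D_i:=B_i\cap T\leq C_T(A_i)$ has $(\epsilon,p)$-bounded index in $C_T(A_i)$ and satisfies $[T,D_i]\leq[T_i,B_i]$, so $|[T,D_i]|$ is $\epsilon$-bounded. Passing to the $A$-core $\bigcap_{a\in A}D_i^{\,a}$ I may assume each $D_i$ is $A$-invariant without disturbing these bounds. Now set $M:=\prod_{i}[T,D_i]$. Each $[T,D_i]$ is normal in $T$ and $A$-invariant, so $M$ is an $A$-invariant normal subgroup of $T$ of $(\epsilon,p)$-bounded order. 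As $\aut(M)$ then has $(\epsilon,p)$-bounded order and $T$ acts on $M$ by conjugation, the centralizer $T_1:=C_T(M)$ is an $A$-invariant normal subgroup of $G$ of $(\epsilon,p)$-bounded index.

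The subgroup $R$ will be assembled from the pieces $E_i:=D_i\cap T_1\leq C_{T_1}(A_i)$, which still have $(\epsilon,p)$-bounded index in $C_{T_1}(A_i)$. Put $L:=\langle E_1,\dots,E_{p+1}\rangle$. For all $i,j$ one has $[E_i,E_j]\leq[T,D_i]\leq M$, so $L'\leq M$ and $|L'|$ is $(\epsilon,p)$-bounded; moreover $L\leq T_1=C_T(M)$ centralizes $M$, whence $[L',L]\leq[M,L]=1$ and $L$ is nilpotent of class at most $2$. To see that $L$ has $(\epsilon,p)$-bounded index in $T_1$ I invoke the coprime product decomposition $T_1=C_{T_1}(A_1)\cdots C_{T_1}(A_{p+1})$ together with the elementary fact that replacing each factor $C_{T_1}(A_i)$ by a subgroup $E_i$ of index $d_i$ enlarges the index by at most $\prod_i d_i$; hence $[T_1:L]$, and so $[G:L]$, is $(\epsilon,p)$-bounded. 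Finally, taking $R$ to be the normal core of $L$ in $G$ gives a normal subgroup with $[G:R]$ bounded, $R'\leq L'$ of bounded order, and class at most $2$, as required.

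The main obstacle is the index estimate $[T_1:L]$: the abstract coprime generation $T_1=\langle C_{T_1}(A_i)\rangle$ is classical, but converting ``$E_i$ of bounded index in $C_{T_1}(A_i)$'' into ``$L$ of bounded index in $T_1$'' genuinely requires the \emph{product} form $T_1=\prod_i C_{T_1}(A_i)$, which for a rank-$2$ elementary abelian group acting coprimely I expect to extract from the coprime-action machinery (reducing, if necessary, through $A$-invariant Sylow subgroups and chief factors so as to avoid any appeal to solvability). A secondary point to verify with care is that every coprime-action identity used — notably the generation of $T_1$ by the $C_{T_1}(A_i)$ — is applied only to $A$-invariant sections, which is exactly the reason for inserting the $A$-core step that makes $M$, and hence $T_1$, invariant under $A$.
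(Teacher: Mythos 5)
Your opening moves are sound and in fact run parallel to the paper's own proof: applying Proposition~\ref{main} to each $C_G(A_i)$, passing to $A$-invariant subgroups of $(\epsilon,p)$-bounded index, forming $M=\prod_i[T,D_i]$ of bounded order, and observing that $L=\langle E_1,\dots,E_{p+1}\rangle$ has $L'\leq M$ and class at most $2$. (One small slip along the way: $M$ is normal in $T$ but not necessarily in $G$, so $T_1=C_T(M)$ need not be normal in $G$; the paper sidesteps this by working with normal closures $\langle[U_i,D_i]^G\rangle$ as in Remark~\ref{remark}.) The genuine gap is exactly the step you flag as the main obstacle, the index bound $[T_1:L]$, and both ingredients you propose for it fail. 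First, the setwise decomposition $T_1=C_{T_1}(A_1)\cdots C_{T_1}(A_{p+1})$ is not classical for arbitrary finite groups: what coprime-action machinery gives (via $A$-invariant Sylow subgroups and the nilpotent case) is \emph{generation}, $T_1=\langle C_{T_1}(A_i)\rangle$; the product form is standard only for nilpotent groups, and you offer no proof in general. Second, and decisively, the ``elementary fact'' that if $K=X_1\cdots X_n$ and $Y_i\leq X_i$ has index $d_i$ then $[K:\langle Y_1,\dots,Y_n\rangle]\leq\prod_i d_i$ is \emph{false} for $n\geq3$. It holds for $n=2$ because $|X_1X_2|=|X_1||X_2|/|X_1\cap X_2|$, but for longer products the coset representatives conjugate the later factors, and those conjugates need not lie in $\langle Y_1,\dots,Y_n\rangle$. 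A concrete counterexample comes from the Bruhat decomposition: for odd $q$ one has $\mathrm{PSL}_2(q)=B\cdot\langle w\rangle\cdot U$, where $B$ is a Borel subgroup, $w$ the Weyl involution and $U\leq B$ the unipotent subgroup; taking $Y_1=B$, $Y_2=1$, $Y_3=U$ (indices $1,2,1$) gives $\langle Y_1,Y_2,Y_3\rangle=B$, whose index $q+1$ is unbounded. So even if the product decomposition were available, your passage from it to boundedness of $[T_1:L]$ does not go through without using much more about centralizers of coprime automorphisms.

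The paper closes this gap with a different device, which is the one idea missing from your proposal: rather than bounding the index of the subgroup \emph{generated} by the $D_i$, it bounds the index of their \emph{normal closure} by a counting argument. Let $V$ be the minimal $A$-invariant normal subgroup of $G$ containing $D_1,\dots,D_{p+1}$. In $\bar G=G/V$, coprimality (Lemma~\ref{cc}) gives $C_{\bar G}(a)=C_G(\langle a\rangle)V/V$ for every $a\in A^{\#}$, and since $\langle a\rangle$ is one of the $A_i$ and $D_i\leq V$ has bounded index in $C_G(A_i)$, every such centralizer has $(\epsilon,p)$-bounded order; Lemma~\ref{eee} then bounds $|\bar G|$, i.e.\ $[G:V]$ is $(\epsilon,p)$-bounded. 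With $U$ the intersection of the bounded-index $A$-invariant normal subgroups and $N$ the product of the normal closures $\langle[U_i,D_i]^G\rangle$, one has $[U,V]\leq N$, and the subgroup $R=U\cap V\cap C_G(N)$ satisfies $[R,R]\leq N\leq C_G(R)$, giving class at most $2$ with all the required bounds. Your argument becomes correct if you replace $L$ by this normal closure and invoke Lemma~\ref{eee} in the quotient; as written, the key index bound is unproven and the lemma it rests on is false.
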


Proposition \ref{main}, which is a key result of this paper, will be proved in the next section. The other results will be established in Sections 3--5.

\section{The key result}

A group is said to be a BFC-group if its conjugacy classes are finite and 
of bounded size. A famous theorem of B. H. Neumann says that in a BFC-group the commutator subgroup $G'$ is finite \cite{bhn}. It follows that if $|x^G|\leq m$ for each $x\in G$, then the order of $G'$ is bounded by a number depending only on $m$. A first explicit bound for the order of $G'$ was found by J. Wiegold \cite{wie}, and the best known was obtained in \cite{gumaroti} (see also \cite{neuvoe} and \cite{sesha}). The main technical tools employed in this paper are provided by the recent results \cite{cri,dms,glasgow,dieshu} strengthening B. H. Neumann's theorem.

A well known lemma due to Baer says that if $A,B$ are normal subgroups of 
a group $G$ such that $[A:C_A(B)]\leq m$ and $[B:C_B(A)]\leq m$ for some integer $m\geq1$, then $[A,B]$ has finite $m$-bounded order (see \cite[14.5.2]{Rob}).

We will require a stronger result. Here and in the rest of the paper, given an element $x\in G$ and a subgroup $H\leq G$, we write $x^H$ for the set of conjugates of $x$ by elements from $H$.

\begin{lemma}\label{normal}
 Let $m\geq1$, and let $G$ be a group containing normal subgroups $A,B$ such that $[A:C_A(y)]\leq m$ and $[B:C_B(x)]\leq m$ for all $x\in A$, $y\in B$. Then $[A,B]$ has finite $m$-bounded order.
\end{lemma}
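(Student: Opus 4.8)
The plan is to reduce first to the case $G=AB$. The hypotheses and the conclusion involve only $A$, $B$ and their commutators, and replacing $G$ by $AB$ keeps $A,B$ normal and preserves every class-size bound. Set $D=A\cap B$, a normal subgroup of $G$ containing $[A,B]$. The first key observation is a bound on $G$-classes inside $D$: for $d\in D$ and $a\in A$, $b\in B$ one has $d^{ab}=(d^a)^b$ with $d^a\in d^A$ and, since $d^a\in D\subseteq A$, also $(d^a)^b\in(d^a)^B$. As $d\in B$ gives $|d^A|\le m$ and $d^a\in A$ gives $|(d^a)^B|\le m$, we obtain $|d^G|=|d^{AB}|\le m^2$ for every $d\in D$. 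Thus $D$ is a normal subgroup of $G$ all of whose elements lie in $G$-classes of size at most $m^2$.

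Next I would invoke the strengthened forms of B.\ H.\ Neumann's theorem established in \cite{dms,dieshu}: since $|d^G|\le m^2$ for all $d$ in the normal subgroup $D$, the commutator subgroup $[D,G]=[D,AB]$ has finite $m$-bounded order. In particular $L:=[[A,B],A]\,[[A,B],B]\le[D,G]$ is $m$-bounded and normal in $G$. Passing to $\ol G=G/[D,G]$, the images satisfy $[\ol D,\ol G]=1$, so $\ol N:=[\ol A,\ol B]\le\ol D$ is central in $\ol G=\ol A\,\ol B$. Hence it suffices to bound $|\ol N|$ in this central situation and then lift: since $|[A,B]|\le|\ol N|\cdot|[D,G]|$, an $m$-bound on $\ol N$ yields an $m$-bound on $[A,B]$.

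The heart of the matter is therefore the central case, which I treat directly. When $N=[A,B]$ is central in $H=AB$, the commutator map is biadditive and induces a nondegenerate pairing $A/C_A(B)\times B/C_B(A)\to N$ whose image generates $N$; moreover each ``row'' $[a,B]$ and each ``column'' $[A,b]$ is a subgroup of $N$ of order at most $m$ (these are exactly the classes $a^B$ and $b^A$, now cosets of $[a,B]$ and of $[A,b]$). From the row bound the exponent of $N$ is $m$-bounded, since every generator $[a,b]$ has order dividing that of the group $[a,B]$ of order $\le m$. To bound $|N|$ it then remains to bound the number of generators, and here I would use subadditivity, $[aa',B]\subseteq[a,B][a',B]$, together with the dual column bound, in a counting argument in the spirit of B.\ H.\ Neumann's original proof \cite{bhn}, to show that boundedly many rows already generate $N$. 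I expect this generation bound to be the main obstacle.

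It is worth emphasizing why Baer's lemma \cite{Rob} is of no direct help even in this central case: the global centralizer index $[A:C_A(B)]$ can be arbitrarily large while every individual index $[A:C_A(b)]$ stays at most $m$. Extraspecial $p$-groups with $A=B=G$ exhibit exactly this, with $[A,B]$ of order $p$ but $[A:C_A(B)]=p^{2k}$ unbounded. This is precisely the phenomenon that forces the use of the BFC machinery, and the reduction above is designed so that, after killing the $m$-bounded subgroup $[D,G]$, the remaining central pairing can be controlled by element-wise class-size bounds alone.
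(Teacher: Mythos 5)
There is a genuine gap, and it sits exactly where your reduction does its work. The claim that ``since $|d^G|\le m^2$ for all $d$ in the normal subgroup $D$, the commutator subgroup $[D,G]$ has finite $m$-bounded order'' is false, and it is not what the cited strengthenings of B.~H.~Neumann's theorem assert. The theorem of \cite{cri} used in the paper (and likewise the results of \cite{dms,dieshu}) concludes that $[H,H]$ is bounded, where $H=\langle K^G\rangle$; applied to the normal subgroup $K=D$ this bounds $[D,D]$ only, never $[D,G]$. The distinction is fatal: in the dihedral group $G$ of order $2p$ ($p$ an odd prime) with $D$ its cyclic subgroup of order $p$, every element of $D$ has at most two conjugates in $G$, yet $[D,G]=D$ has order $p$, which is unbounded. (This example does not contradict the lemma itself: taking $A=D$, $B=G$ there, the hypothesis $[A:C_A(y)]\le m$ fails for the reflections $y$. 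But it shows that bounded $G$-classes of the elements of a normal subgroup give no control whatsoever over its commutator with $G$.) Consequently you may not pass to $G/[D,G]$; what your citations actually allow is passage to $G/[D,D]$, and there $N=[A,B]$ is abelian but in general not central in $AB$, so the bilinear-pairing framework of your ``heart of the matter'' is unavailable. Nor can you bound $[D,A]$ and $[D,B]$ directly from the hypotheses: since $D\le B$ and $D\le A$, doing so is essentially an instance of the lemma being proved, so that route is circular.

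Even setting the reduction aside, the central case is left open at its crucial point: you bound the exponent of $N$, but the bound on the number of generators --- which you yourself flag as ``the main obstacle'' --- is deferred to a Neumann-style counting argument that is never carried out. That counting argument is precisely where all the work lies, and it is what the paper does, with no reduction at all: one first shows each commutator $[x,y]$ has $m$-bounded order (a BFC argument inside $\langle x,y\rangle$), then that each $[x,B]$ has $m$-bounded order (via Schur's theorem \cite[10.1.4]{Rob}), and finally one chooses $a\in A$ with $[B:C_B(a)]$ maximal, takes representatives $b_1,\dots,b_n$ of the class $a^B$, sets $U=C_A(b_1,\dots,b_n)$, and deduces from the maximality count that $[U,B]\le[a,B]$, whence $[A,B]=[a,B]\prod_i[a_i,B]$ over coset representatives $a_i$ of $U$ in $A$. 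Note that this argument nowhere uses centrality of $[A,B]$; so if you could complete your central case by such a count, the same count would prove the lemma outright, making the (broken) reduction unnecessary. As it stands, the proposal is not a proof.
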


\begin{proof} 
We first prove that, given $x\in A$ and $y\in B$,  the order of $[x,y]$ is $m$-bounded. Let $H=\langle x,y\rangle$. By assumptions, $[A:C_A(y)]\leq m$ and $[B:C_B(x)]\leq m$. Hence there exists an $m$-bounded number $l$ such that $x^l$ and $y^l$ are contained in $Z(H)$ (for example we can take $l=m!$). Let $D=A\cap B \cap H$ and $N=\langle D,x^l,y^l\rangle$. Then $H/N$ is abelian of order at most $l^2$. Both $x$ and $y$ have centralizers of index at most $m$ in $N$. Moreover  every element of $N$ has centralizer of index at most $m$ in $N$. Indeed $|d^N| \le |d^A|\le m $ for every $d \in D \le A\cap B$.  So, every element of $H$ is a product 
of at most $l^2+1$ elements each of which has centralizer of index at most $m$ in $N$. Therefore each element of $H$ has centralizer of $m$-bounded index in $H$. We conclude that $H$ is a BFC-group in which the sizes of 
conjugacy classes are $m$-bounded. Hence $|H'|$ is $m$-bounded and so the 
order of $[x,y]$ is $m$-bounded, too.
 
Now we claim that for every $x \in A$, the subgroup  $[x,B]$ has finite $m$-bounded order. Indeed, $x$ has at most $m$ conjugates $\{x^{b_1}, \dots , x^{b_m} \}$ in $B$, so $[x,B]$  is generated by as most $m$ elements. 
 Let $C$ be a maximal normal subgroup of $B$ contained in $C_B(x)$. Clearly $C$ has $m$-bounded index in $B$ and centralizes $[x,B]$. Thus, the centre of $[x,B]$ has $m$-bounded index in $[x,B]$. It follows from Schur's 
theorem \cite[10.1.4]{Rob} that the derived subgroup of $[x,B]$ has finite $m$-bounded order.  Since $[x,B]$ is generated by as most $m$ elements of $m$-bounded order, we deduce that the order of $[x,B]$ is finite and $m$-bounded. 

Choose $a\in A$ such that $[B:C_B(a)]=\max_{x \in A} [B:C_B(x)]$ and set $n=[B:C_B(a)]$,  where $n \le m$. Let $b_1,\dots, b_n$ be elements of 
$B$ such that $a^B=\{a^{b_1},\dots, a^{b_n}\}$ is the set of (distinct) 
conjugates of $a$ by elements of $B$. Set $U=C_A(b_1,\dots,b_n)$ and note that $U$ has $m$-bounded index in $A$.  
 Given $u\in U$, the elements  $(ua)^{b_1},\dots, (ua)^{b_n}$ form the conjugacy class  $(ua)^B$ because they are all different and their number is the allowed maximum. So, for an arbitrary element $y\in B$ there exists 
$i$ such that $(ua)^y=(ua)^{b_i}=u a^{b_i}$. It follows that $u^{-1}u^y=a^{b_i}a^{-y}$, hence  
  \[[u,y]=a^{b_i}a^{-y} =[a, b_i^{a^{-1}}][ y^{a^{-1}},a] \in [a, B].\] Therefore $[U,B]\leq[a,B]$. Let $a_1,\dots,a_s$ be coset representatives of $U$ in $A$ and note that $s$ is $m$-bounded. As each $[x,B]$ is normal in $B$ and $[U,B]\leq[a,B]$, we deduce  that $[A,B]=[a,B]\prod[a_i,B]$. So $[A,B]$ is a product of $m$-boundedly many subgroups of $m$-bounded order. These subgroups are normal in $B$ and therefore their product has finite $m$-bounded order. 
\end{proof}

In the next lemma $B$ is not necessarily normal. Instead, we require that $B$ is contained in an abelian normal subgroup. Throughout, $\langle H^G\rangle$ denotes the normal closure of a subgroup $H$ in $G$.

\begin{lemma}\label{lem2} Let $m\geq1$, and let $G$ be a group containing 
a normal subgroup $A$ and a subgroup $B$ such that $[A:C_A(y)]\leq m$ and $[B:C_B(x)]\leq m$ for all $x\in A$, $y\in B$. Assume further that $\langle B^G\rangle$ is abelian. Then $[A, B]$ has finite $m$-bounded order.
\end{lemma}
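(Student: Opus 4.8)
The plan is to imitate the proof of Lemma~\ref{normal}, replacing the BFC‑type argument that bounded the order of a single commutator by one that exploits the commutativity of $N:=\langle B^G\rangle$. First I would record three structural facts. The subgroup $N$ is abelian and normal in $G$ and contains $B$; for $x\in A$ and $b\in B$ we have $[x,b]=(b^x)^{-1}b\in N$, so $[A,B]\le N$ and in particular $[A,B]$ is abelian; and every element of $B$ (being contained in $N$) acts trivially by conjugation on $N$. Thus it suffices to bound the order of the abelian group $[A,B]$.

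For the local step I would fix $x\in A$ and $b\in B$. Since $[B:C_B(x)]\le m$, the $\langle b\rangle$‑orbit of $x$ has length at most $m$, so $b^{l}\in C_G(x)$ for $l=m!$, i.e. $[x,b^{l}]=1$. On the other hand, because $[x,b]\in N$ and $b\in N$ with $N$ abelian, conjugation by powers of $b$ fixes $[x,b]$, whence
\[[x,b^{k}]=[x,b]^{k}\qquad\text{for every }k.\]
Taking $k=l$ gives $[x,b]^{l}=1$, so $[x,b]$ has order dividing $m!$. Moreover $[x,b]=[x,b']$ whenever $b'\in C_B(x)b$, so the set $\{[x,b]:b\in B\}$ has at most $[B:C_B(x)]\le m$ elements; consequently the subgroup $[x,B]:=\langle [x,b]:b\in B\rangle$ lies in $N$ and has order dividing $(m!)^{m}$, which is $m$-bounded.

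To globalise I would follow Lemma~\ref{normal} verbatim: choose $a\in A$ with $n:=[B:C_B(a)]$ maximal (so $n\le m$), pick $b_1,\dots,b_n\in B$ with $a^B=\{a^{b_1},\dots,a^{b_n}\}$, and set $U=C_A(b_1,\dots,b_n)$, a subgroup of $m$-bounded index in $A$. For $u\in U$ the elements $ua^{b_1},\dots,ua^{b_n}$ are the distinct $B$-conjugates of $ua$, so for any $y\in B$ one gets $[u,y]=a^{b_i}a^{-y}$ for some $i$. Expanding $a^{b_i}=a[a,b_i]$ and $a^{-y}=[a,y]^{-1}a^{-1}$ and using $[a,b_i],[a,y]\in N$, this equals ${}^{a}\!\bigl([a,b_i][a,y]^{-1}\bigr)$, an element of the conjugate subgroup $a[a,B]a^{-1}$. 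Hence $[U,B]\le a[a,B]a^{-1}$, whose order equals that of $[a,B]$ and is therefore $m$-bounded. Finally, taking coset representatives $a_1,\dots,a_s$ of $U$ in $A$ (with $s$ $m$-bounded) and writing $x=ua_j$, the identity $[ua_j,b]=[u,b]^{a_j}[a_j,b]$ shows that $[A,B]$ is contained in the product of the $m$-boundedly many subgroups $a_ja[a,B]a^{-1}a_j^{-1}$ and $[a_j,B]$, each of $m$-bounded order. As all of these lie in the abelian group $N$, their product is a subgroup whose order is at most the product of the individual orders, so $[A,B]$ has $m$-bounded order.

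The main obstacle is the local order bound. In Lemma~\ref{normal} the normality of $B$ forced $[x,y]\in A\cap B$ and a BFC argument finished the job, but here $B$ need not be normal. The point where the hypothesis that $\langle B^G\rangle$ is abelian becomes indispensable is precisely the identity $[x,b^{k}]=[x,b]^{k}$, which converts the bounded length of the $\langle b\rangle$‑orbit of $x$ into a bound on the \emph{order} of $[x,b]$. Once this is in hand, the only novelty compared with Lemma~\ref{normal} is that $[a,B]$ is no longer normal in $B$, so the globalisation has to carry the extra conjugations ${}^{a_j a}$ along; this is harmless because everything happens inside the abelian group $N$, where a finite product of subgroups of bounded order again has bounded order.
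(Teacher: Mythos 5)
Your proof is correct, and its first two steps are the paper's own: the local bound on $[x,B]$ via the identity $[x,b^k]=[x,b]^k$ (valid because $[x,b]$ and $b$ lie in the abelian normal subgroup $N=\langle B^G\rangle$), and the construction of $U=C_A(b_1,\dots,b_n)$ from an element $a$ with $[B:C_B(a)]$ maximal. Where you genuinely diverge is the conclusion. The paper replaces $U$ by its core $V=\bigcap_{x\in A}U^x$ in $A$, proves that $[V,L]$ has $m$-bounded order for $L=\langle B^A\rangle$, deduces that $L$ is a product of boundedly many conjugates of $B$, turns this into conjugacy-class bounds for the pair of \emph{normal} subgroups $A$ and $L$, and finishes by a second appeal to Lemma \ref{normal}, obtaining the formally stronger statement that $[A,L]$ has $m$-bounded order. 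You instead expand each generator as $[ua_j,b]=[u,b]^{a_j}[a_j,b]$ over coset representatives $a_1,\dots,a_s$ of $U$ in $A$, so that $[A,B]$ lies in a product of $2s$ subgroups of $N$ of $m$-bounded order (conjugates of $[a,B]$ together with the subgroups $[a_j,B]$); since $N$ is abelian, such a product is a subgroup of order at most the product of the individual orders. This finish is shorter and more elementary: no passage to the core, no factorization of $L$ as a product of conjugates of $B$, and no reuse of Lemma \ref{normal}. You are also more precise than the paper at one point: the correct containment is $[U,B]\le [a,B]^{a^{-1}}$, as you observe, whereas the paper writes $[U,B]\le[a,B]$; this is harmless in both arguments because only the order of the subgroup is ever used. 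The one blemish on your side is a conjugation-convention mismatch (with $[x,z]^y=y^{-1}[x,z]y$, the identity $[ua_j,b]=[u,b]^{a_j}[a_j,b]$ produces the subgroups $a_j^{-1}a[a,B]a^{-1}a_j$ rather than $a_ja[a,B]a^{-1}a_j^{-1}$), which is immaterial since either way these are conjugates of $[a,B]$ lying in $N$ with the same order. What the paper's longer route buys is the bound on $[A,\langle B^G\rangle]$ rather than just on $[A,B]$; for the lemma as stated, and for its application in Proposition \ref{main}, your conclusion is all that is needed.
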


\begin{proof} Without loss of generality we can 
assume that $G=AB$. Set $L=\langle B^G\rangle=\langle B^A\rangle$. 

Let $x\in A$. There is an $m$-bounded number $l$ such that $x$ centralizes $y^l$ for every $y\in B$. Since $L$ is abelian, $[x,y]^i=[x,y^i]$ for 
each $i$ and therefore the order of $[x,y]$ is at most $l$. Thus $[x,B]$ is an abelian subgroup generated by at most $m$ elements of $m$-bounded order, whence $[x,B]$ has finite $m$-bounded order. 

Now we choose $a\in A$ such that $[B:C_B(a)]$ is as big as possible. Let $b_1,\dots,b_{m}$ be elements of $B$ such that $a^B=\{a^{b_1},\dots,a^{b_{m}}\}$. Set $U=C_A(b_1,\dots,b_m)$ and note that $U$ has $m$-bounded 
index in $A$.  Arguing as in the previous lemma, we see that for arbitrary $u\in U$ and $y\in B$, the conjugate $(ua)^y$ belongs to the set 
$\{(ua)^{b_1},\dots, (ua)^{b_m}\}$. Let  $(ua)^y=(ua)^{b_i}$. Then $u^{-1}u^y=a^{b_i}a^{-y}$ and hence $[u,y]=a^{b_i}a^{-y}\in[a, B]$. Therefore $[U,B]\leq[a,B]$. 

Let $V=\cap_{x\in A}U^x$ be the maximal normal subgroup of $A$ contained in $U$. We know that $[V,B]$ has $m$-bounded order, since $[V,B]\le[a,B]$. Denote the index $[A:V]$ by $s$. Evidently, $s$ is $m$-bounded. Let $a_1,\dots,a_s$ be a transversal of $V$ in $A$. As $[V,B] \le L=\langle B^A\rangle$ is abelian, we have 
$$\langle[V,B]^G\rangle=\langle[V,B]^A\rangle=\prod_{i=1}^s[V,B]^{a_i}.$$ 

Thus $[V,L]=[V, B^A] = \langle[V,B]^A\rangle$ is a product of $m$-boundedly many subgroups of $m$-bounded order, and hence it has $m$-bounded order. Write 
$$L=\langle B^A\rangle \leq \langle B^{V a_i} \mid i=1, \dots s \rangle \leq [V,L]\prod_{i=1}^sB^{a_i}.$$ Thus, it becomes clear that $L$ is a product of $m$-boundedly many conjugates of $B$. Say $L $ is a product of $t$ conjugates of $B$. Then, every $y \in L$ can be written as a product of at most $t$ conjugates of elements of $B$ and consequently  $[A: C_A(y)] \le m^t.$  Moreover, as $A$ is normal in $G$ and $|a^B| \le m$ for every $a\in A$,  the conjugacy class $x^L$ of an element  $x \in A$ has size at 
most $m^t$. Now  Lemma \ref{normal} shows that $[A,B] \le [A,L]$ has finite $m$-bounded order.
\end{proof}

We will now show that if $K$ is a subgroup of a finite group $G$ and $N$ is a normal subgroup of $G$, then $Pr(KN/N,G/N)\geq Pr(K,G)$. More precisely, we will establish the following lemma.

\begin{lemma}\label{quoti} Let $N$ be a normal subgroup of a finite group 
$G$, and let $K\leq G$. Then $Pr(K,G)\leq Pr(KN/N,G/N)Pr(N\cap K,N)$.
\end{lemma}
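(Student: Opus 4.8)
The plan is to express $Pr(K,G)$ through centralizer orders and reduce the whole inequality to a single elementary counting estimate inside $N$. Writing $\overline{G}=G/N$, $\overline{K}=KN/N$ and $M=N\cap K$, I would start from the identity
$$Pr(K,G)=\frac{1}{|K||G|}\sum_{x\in K}|C_G(x)|,$$
and bound each centralizer by passing to the quotient: the restriction to $C_G(x)$ of the projection $G\to\overline G$ has kernel $C_N(x)$ and image inside $C_{\overline G}(\overline x)$, so $|C_G(x)|\le |C_N(x)|\,|C_{\overline G}(\overline x)|$. Substituting this and grouping the elements $x\in K$ according to their image $\overline x\in\overline K$ — for a fixed $\overline u\in\overline K$ the relevant $x$ run over the coset $uM$, where $u\in K$ is any preimage — gives
$$Pr(K,G)\le\frac{1}{|K||G|}\sum_{\overline u\in\overline K}|C_{\overline G}(\overline u)|\left(\sum_{a\in M}|C_N(ua)|\right).$$

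The crux, and the step I expect to be the main obstacle, is to control the inner sum uniformly in $u$, namely to prove
$$\sum_{a\in M}|C_N(ua)|\le\sum_{a\in M}|C_N(a)|=|M||N|\,Pr(M,N).$$
I would do this by reading both sides as counts of pairs $(a,c)\in M\times N$ and fixing $c$: the fibre $\{a\in M:(ua)^c=ua\}$ is either empty or a single coset of $C_M(c)$, since whenever $a_0,a$ both lie in it one checks that $(ua_0)^{-1}(ua)=a_0^{-1}a$ is fixed by conjugation by $c$, hence $a_0^{-1}a\in C_M(c)$. Thus this fibre has order at most $|C_M(c)|=|\{a\in M:a^c=a\}|$, and summing over $c\in N$ yields the displayed inequality. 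This coset argument is the only place where the non-normality of $K$ must be handled, and it is exactly what makes the shift by $u$ harmless.

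Finally I would reassemble. Using the bound above, the right-hand side becomes
$$\frac{|M||N|\,Pr(M,N)}{|K||G|}\sum_{\overline u\in\overline K}|C_{\overline G}(\overline u)|=\frac{|M||N|}{|K||G|}\,Pr(M,N)\cdot|\overline K||\overline G|\,Pr(\overline K,\overline G),$$
since $\sum_{\overline u\in\overline K}|C_{\overline G}(\overline u)|=|\overline K||\overline G|\,Pr(\overline K,\overline G)$. The index identities $|\overline K|=|K|/|M|$ and $|\overline G|=|G|/|N|$ make the numerical prefactor collapse to $1$, leaving
$$Pr(K,G)\le Pr(KN/N,G/N)\,Pr(N\cap K,N),$$
as desired. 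The argument is purely combinatorial, so no hypotheses beyond $N\trianglelefteq G$ and $K\le G$ are needed.
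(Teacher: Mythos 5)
Your proof is correct and follows essentially the same route as the paper's: the same centralizer bound $|C_G(x)|\le |C_N(x)|\,|C_{\overline G}(\overline x)|$, the same grouping of $K$ into cosets of $N\cap K$, and the same double-counting swap in which the fibre over a fixed element of $N$ is shown to be empty or a single coset of the relevant centralizer (the paper phrases this as $C_{S_0}(y)=C_{N\cap K}(y)y_0$, which is your coset argument in a different parametrization). No gaps; the two arguments are the same proof.
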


This is an improvement over \cite[Theorem 3.9]{lescot3} where the result was obtained under the additional hypothesis that $N\leq K$.

\begin{proof} In what follows $\bar{G}=G/N$ and $\bar{K}=KN/N$. Write 
$\bar{K_0}$ for the set of cosets $(N\cap K)h$ with $h\in K$. If $S_0=(N\cap K)h\in\bar{K_0}$, write $S$ for the coset $Nh\in\bar{K}$. Of course, we have a natural one-to-one correspondence between $\bar{K_0}$ and $\bar{K}$.

Write
$$|K||G|Pr(K,G)=\sum_{x\in K}|C_G(x)|=\sum_{S_0\in\bar{K_0}}\sum_{x\in S_0}\frac{|C_G(x)N|}{|N|}|C_N(x)| $$

$$\leq\sum_{S_0\in\bar{K_0}}\sum_{x\in S_0}|C_{\bar{G}}(xN)||C_N(x)|=\sum_{S\in\bar{K}}|C_{\bar{G}}(S)|\sum_{x\in S_0}|C_N(x)|=  $$

$$=\sum_{S\in\bar{K}}|C_{\bar{G}}(S)|\sum_{y\in N}|C_{S_0}(y)|.$$

If $C_{S_0}(y)\neq\emptyset$, then there is $y_0\in C_{S_0}(y)$ and so $S_0=(N\cap K)y_0$. Therefore $$C_{S_0}(y)=(N\cap K)y_0\cap C_G(y)=C_{N\cap K}(y)y_0,\text{ whence }|C_{S_0}(y)|=|C_{N\cap K}(y)|.$$ 
Conclude that $$|K||G|Pr(K,G)\leq \sum_{S\in\bar{K}}|C_{\bar{G}}(S)|\sum_{y\in N}|C_{N\cap K}(y)|.$$ Observe that $$\sum_{S\in\bar{K}}|C_{\bar{G}}(S)|=\frac{|K|}{|N\cap K|}\frac{|G|}{|N|}Pr(\bar{K},\bar{G})$$ and $$\sum_{y\in N}|C_{N\cap K}(y)|=|N\cap K||N|Pr(N\cap K,N).$$
It follows that $Pr(K,G)\leq Pr(\bar{K},\bar{G})Pr(N\cap K,N)$, as required.
\end{proof}

The following theorem is taken from \cite{cri}. It plays a crucial role in the proof of Proposition \ref{main}. 
\begin{theorem}\label{cristi} Let $m$ be a positive integer, $G$ a group having a subgroup $K$
such that $|x^G| \le m$ for each $x\in K$, and let $H=\langle K^G\rangle$. Then the order of the commutator subgroup $[H,H]$ is finite and $m$-bounded.
\end{theorem}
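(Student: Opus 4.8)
The plan is to recast the hypothesis in terms of a normal subset. Put $X=K^G$, the set of all $G$-conjugates of elements of $K$. Since conjugation permutes each conjugacy class, every $z\in X$ satisfies $|z^G|\le m$, and $X$ is a $G$-invariant set with $H=\langle X\rangle$. Replacing $G$ by $H$ only shrinks the conjugacy classes, so I may assume $G=H=\langle X\rangle$. Finally, I would reduce the statement to producing a bound on $|[H,H]|$ that depends on $m$ alone and not on the number of generators: the commutator subgroup is the union of the subgroups $[\langle Y^G\rangle,\langle Y^G\rangle]$ taken over finite subsets $Y\subseteq X$, so a uniform $m$-bound on these finitely generated pieces yields at once both the finiteness of $[H,H]$ and the desired estimate. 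This is the standard passage to the finitely generated (and ultimately finite) situation used in \cite{dms,dieshu}.

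The next step is to extract the structure already forced by the hypothesis on single generators. For $x\in X$ the normal closure $N_x=\langle x^G\rangle$ is generated by the at most $m$ conjugates of $x$, and $C_G(N_x)$ is the intersection of at most $m$ centralizers each of index at most $m$, hence $[G:C_G(N_x)]\le m^m$. Since $C_G(N_x)\cap N_x\le Z(N_x)$, the centre of $N_x$ has $m$-bounded index, and Schur's theorem \cite[10.1.4]{Rob} gives that $|[N_x,N_x]|$ is $m$-bounded. Two consequences are crucial. First, for any $y\in X$ one has $[x,y]=x^{-1}x^{y}\in N_x$, and $[x,y]$ maps trivially into the (cyclic) abelianization of $N_x$, so $[x,y]\in[N_x,N_x]$ and therefore $[x,y]$ has $m$-bounded order. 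Second, $[x,y]^{g}=[x^{g},y^{g}]$ ranges over at most $m^{2}$ elements as $g$ runs through $G$, so $|[x,y]^{G}|\le m^{2}$. As $X$ is normal, the set $Y=\{[x,y]:x,y\in X\}$ is itself $G$-invariant and generates $[H,H]$; thus $[H,H]$ is generated by a normal set of elements of $m$-bounded order whose conjugacy classes have $m$-bounded size.

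The main obstacle is then the passage from this control on the generators and on the commutators of generators to a single uniform bound on the whole of $[H,H]$: one cannot simply apply B. H. Neumann's theorem \cite{bhn} to $H$, because a product of many generators may have an arbitrarily large conjugacy class, so $H$ need not be a BFC-group. To overcome this I would work inside $[H,H]$ through its bounded-class normal generating set $Y$, and bound the number of generating classes that are ``independent modulo the centre'' by means of the covering and counting techniques refining Neumann's theorem, together with the Baer-type estimate of Lemma \ref{normal} and the quantitative versions of the BFC bound in \cite{gumaroti,neuvoe,sesha,glasgow,dieshu}. Concretely, each $\langle z^{G}\rangle$ with $z\in Y$ is finite of $m$-bounded order, and the difficulty is to show that boundedly many of them already generate $[H,H]$; this is exactly the step where the quantitative strengthenings of \cite{bhn} are indispensable, and I expect it to be the technical heart of the argument.
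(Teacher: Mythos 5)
The first thing to note is that the paper does not prove Theorem \ref{cristi} at all: it is quoted from \cite{cri}, where it is the main result, so there is no internal proof to compare against, and what you set out to do is reprove the central theorem of a different paper. Your proposal does not accomplish this: the final paragraph openly defers the decisive step. Showing that boundedly many of the subgroups $\langle z^{G}\rangle$, $z\in Y$, already generate $[H,H]$ --- equivalently, converting the bounded data on generators and on commutators of generators into a single uniform bound on $|[H,H]|$ --- \emph{is} the theorem. Everything preceding it (passing to $G=H$, reducing to finitely many generators, bounding $|[N_x,N_x]|$, bounding the order and the class size of $[x,y]$ for $x,y\in X$, and the finiteness of each $\langle z^{G}\rangle$) consists of standard reductions that the hypotheses yield almost for free. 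You correctly diagnose that $H$ need not be a BFC-group, so that B.~H.~Neumann's theorem cannot be applied directly; but ``covering and counting techniques refining Neumann's theorem'' together with Lemma \ref{normal} is a pointer to a toolbox, not an argument, and none of your reductions brings the problem into a form where any of the cited results actually applies. The proposal is an honest plan with its core missing, not a proof.

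There is also a genuine error in your second paragraph. You claim that for $x,y\in X$ the commutator $[x,y]=x^{-1}x^{y}$ lies in $[N_x,N_x]$ because it ``maps trivially into the (cyclic) abelianization of $N_x$''. Neither assertion is true in general: the abelianization of $N_x=\langle x^{G}\rangle$ is generated by the images of the (at most $m$) conjugates of $x$, these images need not coincide, so the abelianization need not be cyclic, and conjugation by $y$ need not act trivially on it. Concretely, in $G=D_8=\langle r,s\mid r^4=s^2=1,\ srs=r^{-1}\rangle$ take $x=s$, $y=r$ (so $m=2$): then $N_x=\{1,s,r^2s,r^2\}$ is a Klein four group, $[N_x,N_x]=1$, yet $[x,y]=r^2\neq 1$. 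The conclusion you want --- that $[x,y]$ has $m$-bounded order --- is nevertheless correct, by a cleaner route: $Z(\langle x,y\rangle)=C_{\langle x,y\rangle}(x)\cap C_{\langle x,y\rangle}(y)$ has index at most $m^2$ in $\langle x,y\rangle$, so the quantitative form of Schur's theorem bounds $|\langle x,y\rangle'|$, and $[x,y]$ lies in that subgroup. Repairing this step, however, still leaves the main gap described above.
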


A proof of the next lemma can be found in Eberhard \cite[Lemma 2.1]{eberhard}. 

\begin{lemma}\label{lem} 
Let $G$ be a finite group and $X$ a symmetric subset of $G$ containing the identity. Then $\langle X \rangle= X^{3r}$ provided $(r+1)|X| > |G|$. 

\end{lemma}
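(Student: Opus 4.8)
The plan is to interpret $X^{n}$ as the ball of radius $n$ for the word metric on $\langle X\rangle$ determined by the symmetric generating set $X$, and to bound the diameter of $\langle X\rangle$ in this metric. Concretely, it suffices to show that every $g\in\langle X\rangle$ can be written as a product of at most $3r$ elements of $X$; since $X^{3r}\subseteq\langle X\rangle$ always holds (as $1\in X$), this yields the claimed equality. So I would fix $g\in\langle X\rangle$, choose an expression $g=x_1\cdots x_\ell$ of minimal length with all $x_i\in X$, and set $p_0=1$ and $p_i=x_1\cdots x_i$. The first routine observation is that every prefix is itself of minimal length, i.e.\ $p_i$ has word length exactly $i$: a shorter expression for some $p_i$ would, after appending $x_{i+1}\cdots x_\ell$, shorten the expression for $g$.

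The heart of the argument is a disjointness statement for the left translates $p_iX$. First I would note that $p_iX\cap p_jX\neq\emptyset$ (with $i<j$) forces $p_i^{-1}p_j\in XX^{-1}=X^2$, using that $X$ is symmetric; hence $p_i$ and $p_j$ lie at word distance at most $2$. Combined with the prefix-minimality fact and the left-invariance of the word metric, this gives $j\le i+2$. Therefore the translates indexed by $0,3,6,\dots,3\lfloor\ell/3\rfloor$ are pairwise disjoint, producing $\lfloor\ell/3\rfloor+1$ disjoint copies of $X$ inside $\langle X\rangle$. Counting cardinalities and using $|\langle X\rangle|\le|G|$ together with the hypothesis $(r+1)|X|>|G|$ yields $(\lfloor\ell/3\rfloor+1)\,|X|\le|G|<(r+1)|X|$, whence $\lfloor\ell/3\rfloor+1\le r$ and $\ell\le 3r-1$. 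As $g$ was arbitrary, $\langle X\rangle\subseteq X^{3r-1}\subseteq X^{3r}$.

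The step I expect to be the main obstacle is the disjointness of translates that are three indices apart: this is where the constant $3$ is produced, and it rests on correctly combining the overlap condition $p_i^{-1}p_j\in X^2$ (which only detects distance $2$) with the minimality of the chosen expression. Everything else — the prefix-minimality remark and the final pigeonhole — is routine. I would also take care with the harmless reduction that lets me work inside $\langle X\rangle$ rather than $G$, so that the disjoint translates genuinely sit in a set of size at most $|G|$ and the hypothesis $(r+1)|X|>|G|$ can be applied.
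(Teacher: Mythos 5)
Your proof is correct: the prefix-minimality observation, the overlap criterion $p_i^{-1}p_j\in X^2$, and the pigeonhole on the disjoint translates $p_0X,p_3X,\dots$ fit together exactly as you describe, and they give $\ell\le 3r-1\le 3r$. The paper itself gives no proof of this lemma, citing Eberhard's Lemma 2.1 instead, and your argument is essentially that cited proof (geodesic word, translates spaced three apart, cardinality count), so there is nothing to correct or compare beyond that.
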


 We are now ready to prove Proposition \ref{main} which we restate here for the reader's convenience:
\medskip

\noindent{\it  Let $\epsilon>0$, and let $G$ be a finite group having a subgroup $K$ such that $Pr(K,G)\geq\epsilon$. Then there is a normal subgroup $T\leq G$ and a subgroup $B\leq K$ such that the indexes $[G:T]$ and $[K:B]$ and the order of $[T,B]$ are $\epsilon$-bounded.
}

\begin{proof}[Proof of Proposition \ref{main}] Set
 $$X=\{x\in K \mid  |x^G|\leq 2/\epsilon\} \text{ and } B=\langle X\rangle.$$
Note that $K \setminus X=\{ x\in K \mid  |C_G(x)|\leq (\epsilon/2) |G| \}$, whence 
\begin{eqnarray*} 
\epsilon |K||G|  &\le  & | \{ (x,y) \in K\times G \mid xy=yx \} |=\sum_{x\in K}|C_G(x)|\\
&\le & \sum_{x \in X} |G| + \sum_{x  \in K \setminus X} \frac{\epsilon}{2} |G| \\
&\le & |X| |G| +(|K| - |X|)\frac{\epsilon}{2}|G|.  
\end{eqnarray*}
Therefore 
$ \epsilon |K| \le  |X|+  ({\epsilon}/{2}) (|K| - |X|)$,
whence $({\epsilon}/{2}) |K| <  |X|$. 
Clearly, $|B| \ge |X| > ({\epsilon}/{2}) |K|$ and so the index of 
 $B$ in $K$ is at most $2/\epsilon$. 
 As $X$ is symmetric and $(2/\epsilon ) |X| > |K|$, it follows from Lemma 
\ref{lem} that every element of $B$ is a product of at most $6/\epsilon$ elements of $X$.
Therefore $|b^G| \le (2/\epsilon)^{6/\epsilon}$ for every $b \in B$.

 Let $L=\langle B^G\rangle$. Theorem \ref{cristi} tells us that the commutator subgroup $[L,L]$ has $\epsilon$-bounded order. Let us use the bar 
notation for the images of the subgroups of $G$ in $G/[L,L]$. By Lemma \ref{quoti}, 
 $$ Pr(\bar K, \bar G) \ge Pr(K,G)\geq\epsilon. $$ 
 Moreover, 
 $ [\bar K: \bar B] \le [K:B] \le  {\epsilon}/{2} $ and $
 |\bar b^{ \bar G}| \le |b^G| \le (2/\epsilon)^{6/\epsilon} .$
 Thus we can pass to the quotient over $[L,L]$ and assume that $L$ is abelian.   

Now we set
\[ Y= \{ y \in G \mid  |y^K| \le 2/\epsilon\} =
\{ y \in G \mid |C_K(y)| \ge ({\epsilon}/{2}) |K|\}. \]
Note that 
\begin{eqnarray*} 
\epsilon |K||G| & \le & | \{ (x,y) \in K \times G \mid xy=yx \} |\\
&\le & \sum_{y \in Y} |K| + \sum_{y  \in G \setminus Y} \frac{\epsilon}{2} |K| \\
&\le & |Y| |K| +(|G| - |Y|)\frac{\epsilon}{2}|K| \le  |Y| |K| +\frac{\epsilon}{2}|G| |K|.
\end{eqnarray*}
Therefore  $({\epsilon}/{2}) |G| <  |Y|.$

Set $E= \langle Y \rangle$. Thus  $|E| \ge |Y| > ({\epsilon}/{2}) |G|$, 
and so the index of $E$ in $G$ is  at most $2/\epsilon$. As $Y$ is symmetric and $(2/\epsilon ) |Y| > |G|$, it follows from Lemma \ref{lem} that every element of $E$ is a product of at most $6/\epsilon$ elements of $Y$. 
Since $|y^K|\le 2/\epsilon$ for every $y\in Y$, we conclude that $|e^K|\le (2/\epsilon)^{6/\epsilon}$ for every $e\in E$. Let $T$ be the maximal normal subgroup of $G$ contained in $E$. Clearly, the index $[G:T]$ is $\epsilon$-bounded. 
  
 So, now $|b^G| \le (2/\epsilon)^{6/\epsilon} $  for every $b \in B$ and $|e^B| \le (2/\epsilon)^{6/\epsilon}$ for every $e\in T$. 
 As $L$ is abelian, we can apply Lemma \ref{lem2} to conclude that $[T,B]$ has $\epsilon$-bounded order and the result follows. 
\end{proof}

\begin{remark}\label{remark} Under the hypotheses of Proposition \ref{main} the subgroup $N=\langle[T,B]^G\rangle$ has $\epsilon$-bounded order.
\end{remark}
\begin{proof} Since $[T,B]$ is normal in $T$, it follows that there are only boundedly many conjugates of $[T,B]$ in $G$ and they normalize each other. Since $N$ is the product of those conjugates, $N$ has $\epsilon$-bounded order.
\end{proof}

As usual, $Z_i(G)$ stands for the $i$th term of the upper central series of a group $G$.

\begin{remark}\label{remark2} Assume the hypotheses of Proposition \ref{main}. If $K$ is normal, then the subgroup $T$ can be chosen in such a way 
that $K\cap T\leq Z_3(T)$.
\end{remark}
\begin{proof} According to Remark \ref{remark}, $N=\langle[T,B]^G\rangle$ has $\epsilon$-bounded order. Let $B_0=\langle B^G\rangle$ and note that 
$B_0 \le K$ and $[T,B_0]\leq N$. Since the index $[K:B_0]$ and the order of $N$ are $\epsilon$-bounded, the stabilizer in $T$ of the series $$1\leq N\leq B_0\leq K,$$ that is, the subgroup $$H=\{g\in T\ \vert\ [N,g]=1\ \& \ [K,g]\leq B_0\}$$ has $\epsilon$-bounded index in $G$. Note that $K\cap H\leq Z_3(H)$, whence the result.
\end{proof}

\section{Probabilistic almost nilpotency of finite groups}

Our first goal in this section is to furnish a proof of Theorem \ref{fitt}. We restate it here.\medskip

\noindent{\it Let $G$ be a finite group such that $Pr(F^*(G),G)\geq\epsilon$. Then $G$ has a class-$2$-nilpotent normal subgroup $R$ such that both the index $[G:R]$ and the order of the commutator subgroup $[R,R]$ are $\epsilon$-bounded.} 
\medskip

As mentioned in the introduction, the above result yields a conclusion about $G$ which is as strong as in P. M. Neumann's theorem.

\begin{proof}[Proof of Theorem \ref{fitt}.] 
Set $K=F^*(G)$. In view of Proposition \ref{main}  there is a normal subgroup $T\leq G$ and a subgroup $B\leq K$ such that the indexes $[G:T]$ and $[K:B]$, and the order of the commutator subgroup $[T,B]$ are $\epsilon$-bounded. 
 As $K$ is normal in $G$, 
 according to Remark \ref{remark2} the 
subgroup $T$ can be chosen in such a way that $K\cap T\leq Z_3(T)$. By \cite[Corollary X.13.11(c)]{hb3} we have $K\cap T=F^*(T)$. Therefore $F^*(T)\leq Z_3(T)$ and in view of \cite[Theorem X.13.6]{hb3} we conclude that $T=F^*(T)$ and so $T\leq K$. It follows that the index of $K$ in $G$ is $\epsilon$-bounded. By Remark \ref{remark} the subgroup $N=\langle[T,B]^G\rangle$ has $\epsilon$-bounded order. Conclude that $R=\langle B^G\rangle\cap C_G(N)$  has $\epsilon$-bounded index in $G$. Moreover $R$ is nilpotent of class at most 2 and $[R,R]$ has $\epsilon$-bounded order. This completes the proof.
\end{proof}

Now focus on Theorem \ref{main1}, which deals with the case where $\gamma_i(G)\leq K$. Start with a couple of remarks on the result. Let $G$ and $R$ be as in Theorem \ref{main1}. The fact that both the index  $[G:R]$ and the order of $\gamma_{i+1}(R)$ are $\epsilon$-bounded implies that for any $x_1,\dots,x_i\in R$ the centralizer of the long commutator $[x_1,\dots,x_i]$ has $\epsilon$-bounded index in $G$. Therefore there is an $\epsilon$-bounded number $e$ such that $G^e$ centralizes all commutators $[x_1,\dots,x_i]$ where $x_1,\dots,x_i\in R$. Then $G_0=G^e\cap R$ is a nilpotent normal subgroup of nilpotency class at most $i$ with $G/G_0$ of $\epsilon$-bounded exponent (recall that a positive integer $e$ is the exponent of a finite group $G$ if $e$ is the minimal number for which $G^e=1$).

If $G$ is additionally assumed to be $m$-generator for some $m\geq1$, then $G$ has a nilpotent normal subgroup of nilpotency class at most $i$ and $(\epsilon,m)$-bounded index. Indeed, we know that for any $x_1,\dots,x_i\in R$ the centralizer of the long commutator $[x_1,\dots,x_i]$ has $\epsilon$-bounded index in $G$. An $m$-generator group has only $(j,m)$-boundedly many subgroups of any given index $j$ \cite[Theorem 7.2.9]{mhall}. Therefore $G$ has a subgroup $J$ of $(\epsilon,m)$-bounded index that centralizes all commutators $[x_1,\dots,x_i]$ with $x_1,\dots,x_i\in R$. Then $J\cap R$ is a nilpotent normal subgroup of nilpotency class at most $i$ and $(\epsilon,m)$-bounded index in $G$.

These observations are in parallel with Shalev's results on probabilistically nilpotent groups \cite{shalev}.

Our proof of Theorem \ref{main1} requires the following result from \cite{glasgow}.
\begin{theorem}\label{glas} Let $G$ a group such that $|x^{\gamma_k(G)}|\leq n$ for any $x\in G$. Then $\gamma_{k+1}(G)$ has finite $(k,n)$-bounded order.
\end{theorem}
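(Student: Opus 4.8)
The plan is to isolate the abelian case, recast the problem as a statement about a module, and then confront the genuine difficulty, namely bounding the rank of $\gamma_{k+1}(G)=[\gamma_k(G),G]$.

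First I would set $A=\gamma_k(G)$. Since the hypothesis $|x^{A}|\le n$ holds for \emph{every} $x\in G$, it holds in particular for $x\in A$, so $A$ is a BFC-group with class bound $n$. By B. H. Neumann's theorem \cite{bhn} the subgroup $[A,A]$ has $n$-bounded order; it is characteristic in $G$ and satisfies $[A,A]\le\gamma_{2k}(G)\le\gamma_{k+1}(G)$ because $2k\ge k+1$. As conjugacy classes never grow in quotients, the hypothesis passes to $\bar G=G/[A,A]$, where $\gamma_{k+1}(\bar G)=\gamma_{k+1}(G)/[A,A]$. Hence it suffices to bound $\gamma_{k+1}$ under the extra assumption that $A$ is abelian, and then to multiply by the bounded $|[A,A]|$ at the very end.

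Assume now $A$ abelian and regard it as a module for $G$ acting by conjugation. For each $g\in G$ the map $a\mapsto[a,g]=a^{-1}a^{g}$ is an endomorphism of $A$ with kernel $C_A(g)$, so $[A,g]=\{[a,g]\mid a\in A\}$ is a subgroup of order $[A:C_A(g)]=|g^{A}|\le n$. Since $[A,G]\le A$ and $C_G(A)$ centralises $A$, one checks that $[A,g]$ depends only on the coset $gC_G(A)$. A group of order at most $n$ has exponent dividing $L=\mathrm{lcm}(1,\dots,n)$, so $L$ annihilates each $[A,g]$ and therefore also $[A,G]=\sum_{g}[A,g]$. Thus $\gamma_{k+1}(G)$ already has $n$-bounded exponent, and everything reduces to bounding its \emph{rank}, i.e.\ to showing that $[A,G]$ is generated by boundedly many of the small subgroups $[A,g]$.

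The hard part will be exactly this rank bound. Each operator $g-1$ has image of size at most $n$, yet a priori the subgroup they jointly generate could be arbitrarily large; ruling this out is a higher, relative analogue of the BFC phenomenon and is where the full force of \cite{glasgow} is needed. My approach would be to reduce, via a directed-union argument, to $G/C_G(A)$ finitely generated, split $A$ into its elementary abelian layers, and on each layer apply a bounded-degree result for linear groups in which every element fixes a subspace of codimension at most $\log_2 n$; the point is that if the directions $[A,g]$ were too numerous and independent, then a suitable product $g_{1}\cdots g_{r}$ would have $\mathrm{rank}(g_{1}\cdots g_{r}-1)$ exceeding $\log_2 n$, contradicting the hypothesis. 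Note that the reductions above use only that $A$ is normal; the $\gamma_k$-structure enters precisely here, through the fact that $A$ is generated by commutator values and $\gamma_{k+1}(G)$ is the normal closure of finitely many commutators, which lets a Neumann-type argument in the spirit of Theorem \ref{cristi} be run on a finitely generated subgroup and produce a bound depending only on $k$ and $n$. Combining the $n$-bounded exponent with this $(k,n)$-bounded rank gives that $[A,G]$ has $(k,n)$-bounded order, and lifting back through the bounded $[A,A]$ completes the proof.
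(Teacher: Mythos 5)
Your preliminary reductions are correct, but they are the easy half of the theorem. Setting $A=\gamma_k(G)$, the hypothesis applied to $x\in A$ does make $A$ a BFC-group, so $|[A,A]|$ is $n$-bounded by B.~H.~Neumann's theorem, and $[A,A]\le\gamma_{2k}(G)\le\gamma_{k+1}(G)$ lets you pass to $G/[A,A]$; likewise, once $A$ is abelian, each map $a\mapsto[a,g]$ is a homomorphism with image $[A,g]$ of order $|g^A|\le n$, so $\gamma_{k+1}(G)=[A,G]$ has $n$-bounded exponent. All fine. The problem is that everything after this point --- the ``rank bound'', which you yourself identify as the entire difficulty --- is not an argument, and the heuristic you offer for it is false. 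Take $A=\bigoplus_{i\ge 1}\mathbb{F}_p$, fix a nonzero linear functional $\phi:A\to\mathbb{F}_p$, and for $v\in\ker\phi$ let $h_v$ be the transvection $x\mapsto x+\phi(x)v$. Then $h_vh_w=h_{v+w}$, so $H=\{h_v \mid v\in\ker\phi\}$ is an abelian group, and in $G=A\rtimes H$ every element $x$ satisfies $[A:C_A(x)]\le p$ (every $h_v-1$ has rank $1$, and so does every product of the $h_v$'s), yet $[A,G]=\ker\phi$ is infinite. Thus a normal abelian subgroup $A$ with $|x^A|\le n$ for all $x\in G$ need not have finite $[A,G]$: the directions $[A,h_v]=\mathbb{F}_p v$ are infinitely many and independent, but no product ever acquires large rank, so the contradiction you hope to extract (``a suitable product $g_1\cdots g_r$ would have rank exceeding $\log_2 n$'') simply does not occur. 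The same example defeats the directed-union reduction: on finitely generated subgroups of $G/C_G(A)$ one gets finiteness but with no uniform bound.

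This means the $\gamma_k$-structure cannot be bolted on at the end of a purely module-theoretic frame, as your last sentences suggest; it has to drive the argument from the start, and your appeal to ``a Neumann-type argument in the spirit of Theorem \ref{cristi}'' producing a $(k,n)$-bound is precisely the statement to be proved, not a step toward it (indeed you concede that ``the full force of \cite{glasgow} is needed'' there, which is circular). Note also that the paper you are being compared against does not prove Theorem \ref{glas} at all: it quotes it as a known result from \cite{glasgow}, and uses it as a black box in the proof of Theorem \ref{main1}. So the honest assessment is that your proposal establishes only two easy facts --- that $\gamma_{k+1}(G)$ has $n$-bounded exponent modulo a subgroup of $n$-bounded order --- and leaves the actual content of the theorem, the finiteness of $\gamma_{k+1}(G)$ with a $(k,n)$-bound on its order, unproved.
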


We can now  prove  Theorem \ref{main1}. 
\begin{proof}[Proof of Theorem \ref{main1}] Recall that $K$ is a subgroup 
of the finite group $G$ such that $\gamma_k(G)\leq K$ and $Pr(K,G)\geq\epsilon$. In view of \cite[Theorem 3.7]{lescot3} observe that $Pr(\gamma_k(G),G)\geq\epsilon$. Therefore without loss of generality we can assume that $K=\gamma_k(G)$. 

Proposition \ref{main} tells us that there is a normal subgroup $T\leq G$ and a subgroup $B\leq K$ such that the indexes $[G:T]$ and $[K:B]$ and the order of $[T,B]$ are $\epsilon$-bounded. In particular, $|x^{B}|$ is $\epsilon$-bounded  for every $x\in T$. Since $B$ has  $\epsilon$-bounded index in $K$, we deduce that $|x^{\gamma_k(G)}|$ is $\epsilon$-bounded for every $x\in T$. Now Theorem \ref{glas} implies that $\gamma_{k+1}(T)$ has $\epsilon$-bounded order. Set $R=C_T(\gamma_{k+1}(T))$. It follows that $R$ is as required.
\end{proof} 

Our next goal is a proof of Theorem \ref{virtunil}. As mentioned in the introduction, a group-word $w$ implies virtual nilpotency if every finitely generated metabe\-li\-an group $G$ where $w$ is a law, that is $w(G)=1$, has a nilpotent subgroup of finite index. A theorem, due to Burns and Medvedev, states that for any word $w$ implying virtual nilpotency there exist integers $e$ and $c$ depending only on $w$ such that every finite group $G$, in which $w$ is a law, has a nilpotent of class at most $c$ normal subgroup $N$ with $G^e\leq N$ \cite{bume}. 

\begin{proof}[Proof of Theorem \ref{virtunil}.] Recall that $w$ is a group-word implying virtual nilpotency while $K$ is a subgroup of a finite group $G$ such that $w(G)\leq K$ and $Pr(K,G)\geq\epsilon$. We need to show that there is an $(\epsilon,w)$-bounded integer $e$ and a $w$-bounded integer $c$ such that $G^e$ is nilpotent of class at most $c$.

As in the proof of Theorem \ref{main1} without loss of generality we can assume that $K=w(G)$. Proposition \ref{main} tells us that there is a normal subgroup $T\leq G$ and a subgroup $B\leq K$ such that the indexes $[G:T]$ and $[K:B]$ and the order of the commutator subgroup $[T,B]$ are $\epsilon$-bounded. According to Remark \ref{remark2} the subgroup $T$ can 
be chosen in such a way that $K\cap T\leq Z_3(T)$. In particular $w(T)\leq Z_3(T)$. Taking into account that the word $w$ implies virtual nilpotency, we deduce from the Burns-Medvedev theorem that there are $w$-bounded numbers $i$ and $c$ such that the subgroup generated by the $i$th powers of elements of $T$ is nilpotent of class at most $c$. Recall that the index of $T$ in $G$ is $\epsilon$-bounded. Hence there is an $\epsilon$-bounded integer $e$ such that every $e$th power in $G$ is an $i$th power of an element of $T$. The result follows.
\end{proof} 

If $[x^i,y_1,\dots,y_j]$ is a law in a finite group $G$, then $\gamma_{j+1}(G)$ has $\{i,j\}$-bounded exponent (the case $j=1$ is a well-known result, due to Mann \cite{M}; see \cite[Lemma 2.2]{CS} for the case $j\geq2$). If the $j$-Engel word $[x,y,\dots,y]$, where $y$ is repeated $j$ times, is a law in a finite group $G$, then $G$ has a normal subgroup $N$ such that the exponent of $N$ is $j$-bounded while $G/N$ is nilpotent with $j$-bounded class \cite{bume2}. Note that both words $[x^i,y_1,\dots,y_j]$ and $[x,y,\dots,y]$ imply virtual nilpotency.

Therefore, in addition to Theorem \ref{virtunil}, we deduce

\begin{theorem}\label{exp} Assume the hypotheses of Theorem \ref{virtunil}.
\begin{enumerate}
\item If $w=[x^n,y_1,\dots,y_k]$, then $G$ has a normal subgroup $T$ such that the index $[G:T]$ is $\epsilon$-bounded and the exponent of $\gamma_{k+4}(T)$ is $w$-bounded. 
\item There are $k$-bounded numbers $e_1$ and $c_1$ with the property that if $w$ is the $k$-Engel word, then $G$ has a normal subgroup $T$ such that the index $[G:T]$ is $\epsilon$-bounded and the exponent of $\gamma_{c_1}(T)$ divides $e_1$.
\end{enumerate}
\end{theorem}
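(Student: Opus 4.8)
The plan is to run the opening of the proof of Theorem~\ref{virtunil} verbatim and then to exploit the containment $w(T)\le Z_3(T)$ in a purely word-theoretic fashion. As there, I would first reduce to the case $K=w(G)$, then invoke Proposition~\ref{main} to produce a normal subgroup $T\le G$ of $\epsilon$-bounded index together with a subgroup $B\le K$ with $[T,B]$ of $\epsilon$-bounded order, and finally appeal to Remark~\ref{remark2} (applicable since $K=w(G)$ is normal) to arrange $K\cap T\le Z_3(T)$. Because $w(T)\le w(G)\cap T=K\cap T$ by monotonicity of verbal subgroups, this yields $w(T)\le Z_3(T)$. The index $[G:T]$ is already $\epsilon$-bounded, so the only remaining task in each case is to control the appropriate term of the lower central series of $T$.

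The key observation is that $w(T)\le Z_3(T)$ is not merely an inclusion but an identity: for every value $v$ of $w$ in $T$ and all $t_1,t_2,t_3\in T$ one has $[v,t_1,t_2,t_3]=1$. Equivalently, the word obtained from $w$ by appending three further commutator entries is a law in $T$. For part~(1), with $w=[x^n,y_1,\dots,y_k]$, adjoining three fresh variables $y_{k+1},y_{k+2},y_{k+3}$ gives exactly the law $[x^n,y_1,\dots,y_{k+3}]=1$ on $T$, a word again of the shape $[x^n,y_1,\dots,y_j]$ with $j=k+3$. The quoted strengthening of Mann's theorem (\cite{M}, \cite[Lemma 2.2]{CS}) then forces $\gamma_{k+4}(T)=\gamma_{j+1}(T)$ to have $\{n,k+3\}$-bounded, hence $w$-bounded, exponent, which is the assertion of part~(1).

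For part~(2) the same mechanism applies, except that I would specialize rather than add fresh variables. Taking $t_1=t_2=t_3=y$ in the identity $[[x,\underbrace{y,\dots,y}_{k}],t_1,t_2,t_3]=1$ shows that the $(k+3)$-Engel word $[x,\underbrace{y,\dots,y}_{k+3}]$ is a law in $T$. Applying the Burns--Medvedev result \cite{bume2} to $T$ with $j=k+3$ produces a normal subgroup $N\le T$ whose exponent $e_1$ is $(k+3)$-bounded and with $T/N$ nilpotent of $(k+3)$-bounded class, say $c_1-1$; then $\gamma_{c_1}(T)\le N$ and the exponent of $\gamma_{c_1}(T)$ divides $e_1$, with both $c_1$ and $e_1$ depending only on $k$. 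Since these two numbers are manufactured from $k$ alone, before $G$ is introduced, this matches the order of quantifiers in the statement. The one point needing care --- and the real content of the argument --- is the passage from the containment $w(T)\le Z_3(T)$ to these extended laws: for part~(1) one checks that appending three arbitrary commutator variables keeps the word in the family $[x^n,\dots]$, and for part~(2) that the specialization $t_i=y$ is legitimate precisely because the identity holds for \emph{all} choices of the $t_i$. Everything else is a direct citation of the two displayed consequences of $w$-laws together with the $\epsilon$-bounded estimate for $[G:T]$ from Proposition~\ref{main}.
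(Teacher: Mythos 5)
Your proposal is correct and takes essentially the same route as the paper's own proof: reduce to $K=w(G)$, apply Proposition~\ref{main} and Remark~\ref{remark2} to get a normal $T$ of $\epsilon$-bounded index with $w(T)\leq Z_3(T)$, then deduce that $[x^n,y_1,\dots,y_{k+3}]$ (respectively the $(k+3)$-Engel word) is a law in $T$ and cite \cite{M}, \cite[Lemma 2.2]{CS} for part (1) and the Burns--Medvedev theorem \cite{bume2} for part (2). The only difference is that you spell out the word-theoretic step from the containment $w(T)\leq Z_3(T)$ to the extended laws (appending fresh variables, respectively specializing them to $y$), which the paper treats as immediate.
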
 
\begin{proof} By \cite[Theorem 3.7]{lescot3}, without loss of generality we can assume that $K=w(G)$. Proposition \ref{main} tells us that there is a normal subgroup $T\leq G$ and a subgroup $B\leq w(G)$ such that the indexes $[G:T]$ and $[w(G):B]$ and the order of $[T,B]$ are $\epsilon$-bounded. Since $K$ is normal in $G$, according to Remark \ref{remark2} the 
subgroup $T$ can be chosen in such a way that $w(G)\cap T\leq Z_3(T)$. If $w=[x^n,y_1,\dots,y_k]$, then $[x^n,y_1,\dots,y_{k+3}]$ is a law in $T$, whence the exponent of $\gamma_{k+4}(T)$ is $w$-bounded. If $w$ is the $k$-Engel word, then the $(k+3)$-Engel word is a law in $T$ and the theorem follows from the Burns-Medvedev theorem \cite{bume2}.
\end{proof}

\section{Sylow subgroups}

As usual, $O_p(G)$ denotes the maximal normal $p$-subgroup of a finite group $G$. For the reader's convenience we restate Theorem \ref{sylow}:
\medskip

\noindent{\it Let $P$ be a Sylow $p$-subgroup of a finite group $G$ such that $Pr(P,G) \ge \epsilon$. Then $G$ has a class-$2$-nilpotent normal $p$-subgroup $L$ such that both the index $[P:L]$ and the order of the commutator subgroup $[L,L]$ are $\epsilon$-bounded. }

\begin{proof}[Proof of Theorem \ref{sylow}] Proposition \ref{main} tells us that there is a normal subgroup $T\leq G$ and a subgroup $B\leq P$ such that the indexes $[G:T]$ and $[P:B]$ and the order of the commutator subgroup $[T,B]$ are $\epsilon$-bounded. In view of Remark \ref{remark} the 
subgroup $N=\langle[T,B]^G\rangle$ has $\epsilon$-bounded order. Therefore $C=C_T(N)$ has $\epsilon$-bounded index in $G$. Set $B_0=B\cap C$ 
and note that $[C,B_0]\leq Z(C)$. It follows that $B_0\leq Z_2(C)$ and we 
conclude that $B_0\leq O_p(G)$. Let $L=\langle {B_0}^G\rangle$. 
 As $B_0 \le L \le O_p(G)$, it is clear that 
 $L$ is contained in $P$ as a subgroup of $\epsilon$-bounded index. Moreover $[L,L]\leq N$ and so the order of $[L,L]$ is $\epsilon$-bounded. Hence the result.
\end{proof}

We will now prove Theorem \ref{allsylow}.

\begin{proof}[Proof of Theorem \ref{allsylow}] Recall that $G$ is a finite group such that $Pr(P,G) \ge \epsilon$ whenever $P$ is a Sylow subgroup. We wish to show that $G$ has a nilpotent normal subgroup $R$ of nilpotency class at most $2$ such that both the index $[G:R]$ and the order of 
the commutator subgroup $[R,R]$ are $\epsilon$-bounded.

For each prime $p\in\pi(G)$ choose a Sylow $p$-subgroup $S_p$ in $G$. Theorem \ref{sylow} shows that $G$ has a normal $p$-subgroup $L_p$ of class at most $2$ such that both $[S_p:L_p]$ and $|[L_p,L_p]|$ are $\epsilon$-bounded. Since the bounds on $[S_p:L_p]$ and $|[L_p,L_p]|$ do not depend on $p$, it follows that there is an $\epsilon$-bounded constant $C$ such that $S_p=L_p$ and $[L_p,L_p]=1$ whenever $p\geq C$. Set $R=\prod_{p\in\pi(G)}L_p$. 
Then all Sylow subgroups of $G/R$ have $\epsilon$-bounded order and therefore the index of $R$ in $G$ is $\epsilon$-bounded. Moreover, $R$ is of class at most $2$ and $|[R,R]|$ is $\epsilon$-bounded, as required. 
\end{proof}

\section{Coprime automorphisms and their fixed points}

If $A$ is a group of automorphisms of a group $G$, we write $C_G(A)$ for the centralizer of $A$ in $G$. The symbol $A^{\#}$ stands for the set of nontrivial elements of the group $A$. 

The next lemma is well-known (see for example \cite[Theorem 6.2.2 (iv)]{go}). In the sequel we use it without explicit references.

\begin{lemma}\label{cc}
Let  $A$ be a group of automorphisms of a finite group $G$ such that $(|G|,|A|)=1$. Then
$C_{G/N}(A)=NC_G(A)/N$ for any $A$-invariant normal subgroup $N$ of $G$.
\end{lemma}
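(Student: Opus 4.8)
The plan is to prove the two inclusions of $C_{G/N}(A)=NC_G(A)/N$ separately. The inclusion $NC_G(A)/N\subseteq C_{G/N}(A)$ is immediate: if $x\in C_G(A)$ then $x^a=x$ for every $a\in A$, so $(xN)^a=xN$ and $xN\in C_{G/N}(A)$. The entire content is the reverse inclusion, which I would reformulate as a fixed-point statement: every $A$-invariant coset $gN$ (one with $(gN)^a=gN$ for all $a\in A$) contains an element fixed by $A$. Indeed, such an element $h$ satisfies $h\in C_G(A)$ and $hN=gN$, so $gN\in NC_G(A)/N$.

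First I would isolate the two elementary engines of coprime action. \emph{Orbit counting for prime order}: if $|A|=q$ is prime, then $A$ acts on the coset $gN$, a set of size $|N|$ coprime to $q$; since the number of fixed points is congruent to $|N|$ modulo $q$ and $|N|\not\equiv 0 \pmod q$, the coset contains an $A$-fixed element. \emph{Cohomology vanishing for abelian sections}: if $N$ is abelian, then for a preimage $x$ of an $A$-fixed coset the map $a\mapsto c_a=x^{-1}x^a$ satisfies $c_{ab}=c_b\,c_a^{\,b}$, i.e.\ is a $1$-cocycle $A\to N$, and since $\gcd(|A|,|N|)=1$ it is a coboundary; adjusting $x$ by the corresponding element of $N$ produces an $A$-fixed point in the coset.

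The bulk of the work is to reduce the general case to one of these two engines by a double induction, and the organizing principle is the Feit--Thompson theorem: coprimality forces one of $|A|,|N|$ to be odd, hence one of $A$, $N$ to be solvable. If $A$ is solvable I would induct on $|A|$, choosing a normal subgroup $B$ of $A$ with $A/B$ of prime order $q$; by induction $C_{G/N}(B)=NC_G(B)/N$, so an $A$-fixed coset already meets $H=C_G(B)$, say in $hN$. The subgroup $H$ is $A$-invariant, $A/B$ acts on it through a group of prime order, and the natural map $H/(H\cap N)\hookrightarrow G/N$ is an $A$-equivariant injection, so $h(H\cap N)$ is $A/B$-invariant; the prime-order engine then supplies an $A$-fixed point in it lying in the original coset. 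If instead $A$ is not solvable, then $N$ is solvable, and I would induct on $|N|$: when $N$ is abelian I apply the cohomology engine, and when $N$ is nonabelian I take the characteristic (hence $G$-normal and $A$-invariant) subgroup $M=[N,N]$ with $1<M<N$, apply the inductive statement first to $G/M$ with section $N/M$ and then to $G$ with section $M$, composing the two liftings.

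The genuine obstacle is the passage through \emph{nonabelian} sections acted on by a \emph{non-cyclic} group: the cohomology engine only handles abelian $N$, while the orbit-counting engine only handles $A$ of prime order, so neither alone covers a nonabelian chief factor under a non-solvable $A$. The Feit--Thompson dichotomy is exactly what defuses this: when $A$ is not solvable it guarantees $N$ is solvable, so every section met in the $|N|$-induction is eventually abelian and the cohomology engine applies; and when $A$ is solvable the $|A|$-induction never needs more than the prime-order engine, regardless of the structure of $N$. Finally I would note, though it is not needed for the statement, that the $A$-fixed elements of a given coset form a single coset of $C_N(A)$, which yields the companion uniqueness assertion.
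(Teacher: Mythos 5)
Your proof is correct, but it is worth pointing out that the paper itself does not prove this lemma at all: it is stated as well known and attributed to Gorenstein \cite[Theorem 6.2.2(iv)]{go}. The proof in Gorenstein runs through the conjugacy part of the Schur--Zassenhaus theorem: one observes that $N\rtimes A$ acts transitively on the $A$-invariant coset $gN$, that the point stabilizers are complements to $N$ in $N\rtimes A$, and that conjugacy of complements forces $A$ itself to stabilize a point, i.e.\ to fix an element of the coset. Your double induction replaces that single appeal to Schur--Zassenhaus by its two elementary ingredients worked out by hand: orbit counting for prime-order actions (your Case of solvable $A$, inducting on $|A|$ through a maximal normal subgroup $B$ with $C_G(B)$ playing the role of the intermediate group) and vanishing of $H^1(A,N)$ for abelian coprime coefficients (your Case of solvable $N$, inducting on $|N|$ through $[N,N]$). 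The two routes are genuinely parallel rather than identical: both hinge on the Feit--Thompson theorem at exactly the same point, since the conjugacy part of Schur--Zassenhaus in full generality also needs the solvability of $N$ or of the complement. What your version buys is self-containedness --- every step is verifiable from first principles plus two standard facts --- at the cost of length; what the citation buys is brevity, which is the appropriate choice in the paper given that the lemma is classical and used there only as a tool. All the individual verifications in your argument check out: the $A$-invariance of $h(H\cap N)$ follows from $H\cap hN=h(H\cap N)$, the cocycle identity $c_{ab}=c_b\,c_a^{\,b}$ is right for a right action, and the two liftings in the nonabelian step compose correctly because $g'M\subseteq g'N=gN$.
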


\begin{proof}[Proof of Theorem \ref{auto}.] Recall that $G$ is a finite group admitting a coprime automorphism $\phi$ of prime order $p$ such that 
$Pr(K,G)\geq\epsilon$, where $K=C_G(\phi)$. We need to show that $G$ has a nilpotent subgroup of $p$-bounded nilpotency class and $(\epsilon,p)$-bounded index.

By Proposition \ref{main} there is a normal subgroup $T\leq G$ and a subgroup $B\leq K$ such that the indexes $[G:T]$ and $[K:B]$ and the order of 
the commutator subgroup $[T,B]$ are $\epsilon$-bounded. Let $T_0$ be the maximal $\phi$-invariant subgroup of $T$. Evidently, $T_0$ is normal and the index $[G:T_0]$ is $(\epsilon,p)$-bounded. Since $\langle[T_0,B]^G\rangle\leq\langle[T,B]^G\rangle$, Remark \ref{remark} implies that $M=\langle[T_0,B]^G\rangle$ has $\epsilon$-bounded order.  Moreover, $M$ is $\phi$-invariant. Set $D=C_G(M)\cap T_0$ and $\bar{D}=D/Z_2(D)$, and note that $D$ is $\phi$-invariant. 

In a natural way $\phi$ induces an automorphism of $\bar{D}$ which we will denote by the same symbol $\phi$. 
We note that  $C_{\bar{D}}(\phi)=C_D(\phi) Z_2(D)/Z_2(D) $, so its order is $\epsilon$-bounded because $B\cap D\leq Z_2(D)$. The Khukhro theorem 
\cite{khukhro} now implies that $\bar{D}$ has a nilpotent subgroup of $p$-bounded class and $(\epsilon,p)$-bounded index. Since $\bar{D}=D/Z_2(D)$ and since the index of $D$ in $G$ is $(\epsilon,p)$-bounded, we deduce 
that $G$ has a nilpotent subgroup of $p$-bounded class and $(\epsilon,p)$-bounded index. The proof is complete.
\end{proof}

A proof of the next lemma can be found in \cite{gushu}.

\begin{lemma}\label{eee} If $A$ is a noncyclic elementary abelian $p$-group acting on a finite $p'$-group $G$ in such a way that $|C_G(a)|\leq m$ for each $a\in A^{\#}$, then the order of $G$ is at most $m^{p+1}$.
\end{lemma}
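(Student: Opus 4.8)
The plan is to deduce the lemma from the sharp inequality
$$|G|\le \prod_{i=1}^{p+1}|C_G(A_i)|,$$
where $A_1,\dots,A_{p+1}$ are the maximal subgroups of a suitable rank-$2$ subgroup of $A$. Once this is available the bound follows at once: each $A_i=\langle a_i\rangle$ has order $p$, so $C_G(A_i)=C_G(a_i)$ and $|C_G(a_i)|\le m$ by hypothesis, giving $|G|\le m^{p+1}$. First I would note that the hypothesis is inherited by subgroups of $A$, so there is no loss in assuming $A$ is elementary abelian of rank exactly $2$; then $A$ has precisely $p+1$ subgroups $A_1,\dots,A_{p+1}$ of order $p$.

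The main reduction is to pass from $G$ to its $A$-composition factors. I would take a chief series of the semidirect product $G\rtimes A$ refining $1\le G\le G\rtimes A$, and let $V_1,\dots,V_k$ be the factors lying inside $G$; each $V_j$ is $A$-invariant and is either an elementary abelian $q$-group with $q\neq p$ or a product of isomorphic nonabelian simple groups. Since $G$ is a $p'$-group, the action of each $a\in A^{\#}$ on every section of $G$ is coprime, so applying Lemma \ref{cc} repeatedly along the series yields the multiplicativity
$$|C_G(a)|=\prod_{j=1}^{k}|C_{V_j}(a)|\qquad(a\in A^{\#}).$$
Consequently, if the \emph{local} inequality $|V_j|\le\prod_{i=1}^{p+1}|C_{V_j}(A_i)|$ holds on every factor, then
$$|G|=\prod_{j}|V_j|\le\prod_{j}\prod_{i}|C_{V_j}(A_i)|=\prod_{i}\prod_{j}|C_{V_j}(a_i)|=\prod_{i=1}^{p+1}|C_G(a_i)|\le m^{p+1},$$
so everything comes down to a single chief factor.

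For an abelian factor $V_j$ I would regard it as a semisimple $\mathbb{F}_q[A]$-module (Maschke applies since $q\neq p$) and decompose $V_j\otimes\overline{\mathbb{F}_q}$ into the characters of $A$. The fixed subspace $C_{V_j}(A_i)$ corresponds to the characters trivial on $A_i$; now each nontrivial character of $A$ is trivial on a unique $A_i$, while the trivial character is trivial on all $p+1$ of them. Summing dimensions over $i$ gives the sharp identity $|V_j|\cdot|C_{V_j}(A)|^{p}=\prod_{i=1}^{p+1}|C_{V_j}(A_i)|$, hence in particular the required $|V_j|\le\prod_i|C_{V_j}(A_i)|$. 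For a nonabelian factor $V_j=S_1\times\cdots\times S_t$, with $A$ permuting the simple factors transitively, the orbit length is a power of $p$, so $t\in\{1,p,p^2\}$; the cases $t=p$ and $t=p^2$ are settled by directly computing the diagonal fixed subgroups of the permuting elements, and if some nontrivial element of $A$ acts trivially on $V_j$ then the corresponding $C_{V_j}(A_i)$ equals $V_j$ and the local bound is immediate.

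The hard part will be the remaining case of a faithful coprime action of $A\cong(\mathbb{Z}/p)^2$ on a single nonabelian simple group $S$, where the needed inequality $|S|\le\prod_{i=1}^{p+1}|C_S(a_i)|$ is a genuine statement about fixed points of coprime automorphisms of simple groups and cannot be reached by the module bookkeeping above. Here I would appeal to the classification of finite simple groups, via the known lower bounds on the orders of centralizers of coprime automorphisms, to close this last case; this is the only point at which the elementary arguments are insufficient.
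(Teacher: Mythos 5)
First, a point of reference: the paper does not prove Lemma \ref{eee} at all --- it is quoted from Guralnick--Shumyatsky \cite{gushu}, so the comparison is with the argument given there. Your overall plan (reduce everything to the inequality $|G|\le\prod_{i=1}^{p+1}|C_G(A_i)|$, and prove it factorwise along an $A$-invariant series using the multiplicativity of fixed points under coprime action) is sound, and your treatment of the abelian factors --- Maschke plus the observation that every nontrivial $\overline{\mathbb{F}_q}$-character of $A$ has kernel equal to exactly one $A_i$ --- is precisely the computation at the core of the standard proof. Two small repairs in the nonabelian bookkeeping: $A$ need not act transitively on the simple factors of a chief factor of $G\rtimes A$ (only $G\rtimes A$ does), so one should first split into $A$-orbits; and in the permuted cases $t=p,p^2$ the diagonal count works because $a^p=1$ forces the product of the twisting automorphisms around each cycle to be the identity, so the fixed points on each orbit form a full twisted diagonal of order $|S|$.

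The genuine gap is the case you explicitly leave open: a faithful action of an elementary abelian group of order $p^2$ on a nonabelian simple group $S$ with $p\nmid|S|$. There is no citable theorem of the form ``lower bounds on centralizers of coprime automorphisms'' that closes this case as stated; what one would actually invoke is that the case is \emph{vacuous}: coprimality forces $A\cap\mathrm{Inn}(S)=1$, so $A$ embeds in $\mathrm{Out}(S)$, and by the classification the Sylow $p$-subgroups of $\mathrm{Out}(S)$ are cyclic for every prime $p$ not dividing $|S|$ (the diagonal and graph parts involve only primes dividing $|S|$, and the field part is cyclic). So your route can be completed, but only through CFSG, and as written it stops exactly at its hardest point.

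The proof in \cite{gushu} avoids this entirely by choosing a different reduction, and this is the idea you are missing: since the action of $A$ on $G$ is coprime, for every prime $q$ there is an $A$-invariant Sylow $q$-subgroup $Q_q$ of $G$, and moreover $C_{Q_q}(A_i)=Q_q\cap C_G(A_i)$ is a Sylow $q$-subgroup of $C_G(A_i)$ (standard coprime-action facts, see \cite[Chapter 6]{go}). Hence $|G|=\prod_q|Q_q|$ and $|C_G(A_i)|=\prod_q|C_{Q_q}(A_i)|$, so it suffices to prove $|Q|\le\prod_{i=1}^{p+1}|C_Q(A_i)|$ for a $q$-group $Q$ --- where every $A$-chief factor is abelian and your own module argument finishes the job (alternatively, one quotes the fact that a nilpotent group with a coprime action of a noncyclic abelian group is the product of the centralizers $C_Q(A_i)$). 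Replacing the chief series of $G\rtimes A$ by this Sylow reduction makes the whole proof elementary and CFSG-free.
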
 

We will now prove Theorem \ref{auto2}.

\begin{proof}[Proof of Theorem \ref{auto2}] By hypotheses, $G$ is a finite group admitting an elementary abelian coprime group of automorphisms $A$ of order $p^2$ such that $Pr(C_G(\phi),G)\geq\epsilon$ for each $\phi\in A^{\#}$. We need to show that $G$ has a nilpotent normal subgroup $R$ of nilpotency class at most $2$ such that both the index $[G:R]$ and the order of the commutator subgroup $[R,R]$ are $(\epsilon,p)$-bounded. 

Let $A_1,\dots,A_{p+1}$ be the subgroups of order $p$ of $A$ and set $G_i=C_G(A_i)$ for $i=1,\dots,p+1$. According to Proposition \ref{main} for each $i=1,\dots,p+1$ there is a normal subgroup $T_i\leq G$ and a subgroup $B_i\leq G_i$ such that the indexes $[G:T_i]$ and $[G_i:B_i]$ and the order of the commutator subgroup $[T_i,B_i]$ are $\epsilon$-bounded. We let $U_i$ denote the maximal $A$-invariant subgroup of $T_i$ so that each $U_i$ is a normal subgroup of $(\epsilon,p)$-bounded index. The intersection of all $U_i$ will be denoted by $U$. Further, we let $D_i$ denote the maximal $A$-invariant subgroup of $B_i$ so that each $D_i$ has $(\epsilon,p)$-bounded index in $G_i$. Note that a modification of Remark \ref{remark} implies that $N_i=\langle[U_i,D_i]^G\rangle$ is $A$-invariant and has $\epsilon$-bounded order. It follows that the order of $N=\prod_iN_i$ is $(\epsilon,p)$-bounded. Let $V$ denote the minimal ($A$-invariant) normal subgroup of $G$ containing all $D_i$ for $i=1,\dots,p+1$. It is easy to see that $[U,V]\leq N$.

Obviously, $U$ has $(\epsilon,p)$-bounded index in $G$. Let us check that 
this also holds with respect to $V$. Let $\bar{G}=G/V$. Since $V$ contains $D_i$ for each $i=1,\dots,p+1$ and since $D_i$ has $(\epsilon,p)$-bounded index in $G_i$, we conclude that the image of $G_i$ in $\bar{G}$ has $(\epsilon,p)$-bounded order. Now Lemma \ref{eee} tells us that the order of $\bar{G}$ is $(\epsilon,p)$-bounded and we conclude that indeed $V$ has $(\epsilon,p)$-bounded index in $G$. Also note that since $N$ has $(\epsilon,p)$-bounded order, $C_G(N)$ has $(\epsilon,p)$-bounded index in 
$G$. Let $$R=U\cap V\cap C_G(N).$$ Then $R$ is as required since the subgroups $U,V,C_G(N)$ have $(\epsilon,p)$-bounded index in $G$ while $[R,R]\leq N\leq C_G(R)$. The proof is complete.
 \end{proof}


\begin{thebibliography}{10}
\bibitem{cri}  C. Acciarri, P. Shumyatsky, A stronger form of Neumann's BFC-theorem, Israel J. Math. (2021) doi: 10.1007/s11856-021-2133-1. 
\bibitem{black} S. Black,  {\it Which words spell "almost nilpotent?''} J. Algebra 221 (1999), 47--496. 
\bibitem{bume2} R.G. Burns, Y. Medvedev, {\it A note on Engel groups and local nilpotence}. J. Aust. Math. Soc. 64 (1998), 92--100.
\bibitem{bume} R.G. Burns, Y. Medvedev, {\it Group laws implying virtual nilpotence}. J. Aust. Math. Soc. 74 (2003), 295--312.
\bibitem{CS} J. Caldeira, J., Shumyatsky, P.: {On verbal subgroups in residually finite groups},
Bull. Aust. Math. Soc. 84  no. 1, 159--170 (2011)

\bibitem{dms}  E. Detomi, M. Morigi, P. Shumyatsky, BFC-theorems for higher commutator subgroups, Quarterly J. Math., {\bf 70} (2019), no. 3, 849--858.
\bibitem{glasgow}  E. Detomi, G. Donadze, M. Morigi, P. Shumyatsky, On finite-by-nilpotent groups, Glasgow Math. J., {\bf 63} (2021), 54--58.
\bibitem{dieshu} G. Dierings, P. Shumyatsky, Groups with boundedly finite 
conjugacy classes of commutators, Quarterly J. Math.,  {\bf 69} (2018), no.~3, 1047--1051.
\bibitem{eberhard} S. Eberhard, Commuting probabilities of finite groups, 
Bull. London Math. Soc., {\bf47} (2015), 796--808.
\bibitem{lescot3} A. Erfanian, R. Rezaei, P. Lescot, On the Relative Commutativity Degree of a Subgroup of a Finite Group, Comm. Algebra, {\bf 35} 
(2007), 4183--4197.
\bibitem{go} D. Gorenstein, {\it Finite Groups}, Chelsea Publishing Company, New York, 1980.
\bibitem{groves} J.R.J Groves, {\it Varieties of soluble groups and a dichotomy of P. Hall}. Bull. Austral. Math. Soc. 5 (1971), 391--410.
\bibitem{Gr53} K.\,W.\ Gruenberg, \textit{Two theorems on Engel groups},  
Proc.\ Cambridge Philos.\ Soc.\ \textbf{49} (1953), 377--380.

\bibitem{gushu} R. M. Guralnick, P. Shumyatsky, Derived subgroups of fixed points, Israel J. Math, {\bf 126} (2001), 345--362.
\bibitem{guraro} R. M. Guralnick, G. Robinson, On the commuting probability in finite groups, J. Algebra {\bf 300} (2006), 509--528.
\bibitem{gumaroti} R. M. Guralnick, A. Maroti, Average dimension of fixed 
point spaces with applications, J. Algebra {\bf 226} (2011), 298--308.
\bibitem{gustaf} W. H. Gustafson, What is the probability that two group elements commute?, Amer. Math. Monthly, {\bf80} (1973), 1031--1034.
\bibitem{mhall}  M. J. Hall,  The theory of groups. The Macmillan Co., New York, 1959.
\bibitem{higman} G. Higman, Groups and rings having automorphisms without 
non-trivial fixed elements, J. London Math. Soc. {\bf 32} (1957), 321--334.
\bibitem{hb3} B. Huppert, N. Blackburn, Finite groups III, Springer-Verlag, Berlin, 1982.
\bibitem{khukhro} E. I. Khukhro, Groups and Lie rings admitting  an almost regular automorphism of prime order, Math. USSR-Sb., {\bf 71} (1992), 51--63.
\bibitem{lescot1} P. Lescot, Sur certains groupes finis, Rev. Math. Sp\'eciales, Avril 1987, 276--277. 
\bibitem{lescot2} P. Lescot, Degr\'e de commutativit\'e et structure d'un 
groupe fini (1), Rev. Math. Sp\'eciales, Avril 1988, 276--279. 
\bibitem{M}A.\ Mann,  { The exponent of central factors and commutator groups},
J. Group Theory \textbf{10} (2007), 435--436.
\bibitem{mann} A. Mann, Groups satisfying identities with high probability, Internat. J. Algebra Comput., {\bf 28} (2018), 1575--1584. 
\bibitem{nath} R.K. Nath, M.K. Yadav, Some results on relative commutativity degree, Rend. Circ. Mat. Palermo, {\bf 64} (2) (2015), 229--239. 
\bibitem{bhn} B. H. Neumann, Groups covered by permutable subsets, J. London Math. Soc. (3) {\bf 29} (1954), 236--248.
\bibitem{neuvoe} P. M. Neumann, M.R. Vaughan-Lee, An essay on BFC groups, 
Proc. Lond. Math. Soc.  {\bf 35} (1977), 213--237.
\bibitem{neumann} P. M. Neumann, Two combinatorial problems in group theory, Bull. Lond. Math. Soc., {\bf 21} (1989), 456--458.
\bibitem{Rob}  D. J. S. Robinson,  {\it A course in the theory of groups},  Second edition. Graduate Texts in Mathematics, 80. Springer-Verlag, New York, 1996.
\bibitem{sesha} D. Segal, A. Shalev, On groups with bounded conjugacy classes, Quart. J. Math. Oxford {\bf 50} (1999), 505--516.
\bibitem{shalev} A. Shalev, Probabilistically nilpotent groups, Proc. Amer. Math. Soc. {\bf146} (2018), 1529--1536.
\bibitem{thompson} J. Thompson, Finite groups with fixed-point-free automorphisms of prime order, Proc. Nat. Acad. Sci. U.S.A., {\bf 45} (1959), 578--581.
\bibitem{wie} J. Wiegold, Groups with boundedly finite classes of conjugate elements, Proc. Roy. Soc. London Ser. A {\bf 238} (1957), 389--401.
\end{thebibliography}
\end{document}